\documentclass[a4paper]{amsart}
\usepackage{url}
\usepackage{graphicx}
\usepackage[all]{xy}
\usepackage[mathscr]{eucal}
\usepackage{amsmath,amssymb,amsfonts}

\usepackage{stmaryrd} 

\usepackage{mathrsfs,latexsym,amsthm,enumerate}
\usepackage{amscd}
\newtheorem{theorem}{Theorem}[section]
\newtheorem{lemma}[theorem]{Lemma}
\newtheorem{corollary}[theorem]{Corollary}
\newtheorem{example}[theorem]{Example}

\newtheorem{proposition}[theorem]{Proposition}
\newtheorem{remark}[theorem]{Remark}

\usepackage{stackengine}
\usepackage{xcolor}
\usepackage[all]{xy}

\title{Characterizations of classes of countable Boolean inverse monoids}

\author{Mark V. Lawson}
\address{Mark V. Lawson, Department of Mathematics
and the
Maxwell Institute for Mathematical Sciences,
Heriot-Watt University,
Riccarton,
Edinburgh EH14 4AS,
UNITED KINGDOM}
\email{m.v.lawson@hw.ac.uk}

\author{Philip Scott}
\address{Philip Scott,
Department of Mathematics and Statistics, 
University of Ottawa, 
STEM Complex, 
Ottawa, 
Ontario K1N 6N5, 
CANADA}

\begin{document}
\dedicatory{This paper is dedicated to the memory of Pieter Hofstra, friend and colleague.}

\begin{abstract}
A countably infinite Boolean inverse monoid that can be written as an increasing union
of finite Boolean inverse monoids (suitably embedded) is said to be of finite type.
Borrowing terminology from $C^{\ast}$-algebra theory, we say that such a Boolean inverse monoid is AF (approximately finite)
if the finite Boolean inverse monoids above are isomorphic to finite direct products of finite symmetric inverse monoids,
and we say that it is UHF (uniformly hyperfinite) if the finite Boolean inverse monoids are in fact isomorphic to finite symmetric inverse monoids.
We characterize abstractly the Boolean inverse monoids of finite type and those which are AF and, by using MV-algebras, we also characterize the UHF monoids.
\end{abstract}
\maketitle

\section{Introduction} 

In this paper, we shall be interested in countably infinite Boolean inverse monoids $S$ which can be written in the form $S = \bigcup_{i=1}^{\infty} S_{i}$
where $S_{1} \subseteq S_{2} \subseteq S_{3} \subseteq \ldots$ and where each $S_{i}$ 
is a subalgebra\footnote{This term will be defined later. It is much stronger than merely being an inverse submonoid.} 
of $S$.
The goal of this paper is to characterize $S$ in three different situations:
\begin{enumerate}

\item The $S_{i}$ are finite Boolean inverse monoids. In this case, we say that $S$ is {\em of finite type}.
They are characterized in Theorem~\ref{them:locally-finite}.

\item The $S_{i}$ are {\em matricial semigroups}: that is, finite direct products of finite symmetric inverse monoids.
In this case, we say that $S$ is an {\em AF monoid}. These were the subject of \cite{LS}.
They are characterized in Theorem~\ref{them:locally-matricial}.

\item The $S_{i}$ are finite symmetric inverse monoids.
In this case, we say that $S$ is a {\em UHF monoid}. These are a special case of (2).
They are characterized in Theorem~\ref{thm:UHF}.

\end{enumerate}
Cases (2) and (3) are clearly motivated by the theory of $C^{\ast}$-algebras.

\begin{remark}
{\em The following conventions and definitions are used throughout this paper:
\begin{enumerate}

\item See \cite{Howie1976} for general semigroup theory, which we shall assume;
on a point of notation: if $S$ is a monoid then $\mathsf{U}(S)$ denotes its group of units.

This paper is based on the theory of inverse semigroups (with zero),
and we shall refer to \cite{Lawson1998} when necessary.
We recall a few basic results below.
The only partial order considered in an inverse semigroup is the natural partial order, denoted by $\leq$; meets and joins are always taken with respect to this order.
An element $a$ is an atom if $b \leq a$ implies that $b = a$ or $b = 0$.
The set of idempotents of an inverse semigroup $S$ is denoted by $\mathsf{E}(S)$ and if $X \subseteq S$ then we put $\mathsf{E}(X) = X \cap \mathsf{E}(S)$.
If $a \in S$ then $\mathbf{d}(a) = a^{-1}a$, the {\em domain} of $a$, and $\mathbf{r}(a) = aa^{-1}$, the {\em range} of $a$.
You can check directly that $\mathbf{r}(xy) = \mathbf{r}(x\mathbf{r}(y))$
and 
$\mathbf{d}(xy) = \mathbf{d}(\mathbf{d}(x)y)$.
If $e$ and $f$ are idempotents then $e \stackrel{a}{\longrightarrow} f$ means that $\mathbf{d}(a) = e$ and $\mathbf{r}(a) = f$;
it also means that $e \, \mathscr{D} \, f$.
This enables us to draw pictures of elements of inverse semigroups.
If $\mathbf{d}(a) = \mathbf{r}(b)$, then $ab$ is called the restricted product;
the restricted product $ab$ is usually denoted by $a \cdot b$. 
An inverse semigroup $S$ with respect to the restricted product $(S,\cdot)$ is a groupoid.
Thus groupoids play a role in inverse semigroup theory;
we shall therefore use terminology from groupoid theory where appropriate.
The compatibility relation in $S$ is denoted by $\sim$.
If $e$ is an idempotent then the operation $e \mapsto aea^{-1}$ is called conjugation
and the idempotent $aea^{-1}$ is called the conjugate of $e$.
If $S$ is an inverse monoid, then $\mathsf{U}(S)$ acts on $\mathsf{E}(S)$ by $e \mapsto geg^{-1}$,
where $g$ is a unit.
This is called the {\em natural action}.
If $e$ and $f$ are idempotents, we say they are orthogonal, denoted by $e \perp f$,
if $ef = 0$.
We write $a \perp b$ and say $a$ and $b$ are orthogonal if $\mathbf{d}(a) \perp \mathbf{d}(b)$ and $\mathbf{r}(a) \perp \mathbf{r}(b)$.

\item We shall always insist that $0 \neq 1$ in a Boolean algebra.
The meet operation in a Boolean algebra will be denoted by concatenation; the complement of the element $e$ will be denoted by denoted by $\bar{e}$.
In a Boolean algebra, $ef = 0$ if and only if $e \leq \bar{f}$.

\item $\bigvee \varnothing = 0$ in any poset with a zero.

\item We make a distinction between a partial order and a strict partial order:
thus we write $\subseteq$ for containment and $\subset$ for strict containment.

\item We write $e \preceq f$ if $SeS \subseteq SfS$ where $e$ and $f$ are idempotents.
This is equivalent to $e \, \mathscr{D} \, e' \leq f$ for some idempotent $e'$.
This notation was first used in \cite{LS}.

\item We say that an inverse monoid is {\em directly finite} if $1 \, \mathscr{D} \, e$, where $e$ is an idempotent, implies that $e = 1$.

\item We say that an idempotent $e$ in an inverse semigroup is {\em Dedekind finite} if $f \leq e$ and $e \, \mathscr{D} \, f$ implies that $e = f$.
If every idempotent in an inverse semigroup is Dedekind finite we say that the inverse semigroup is {\em purely Dedekind finite};
the usual term for such inverse semigroups is {\em completely semisimple}. But for Boolean inverse monoids, the term `semisimple' has a different meaning.

\end{enumerate}}
\end{remark}

\noindent
{\bf Acknowledgements} The authors are grateful to the anonymous referee for meticulously reading the various versions of this paper and for the many constructive suggestions
which led to improvements.
Individual contributions by the referee are acknowledged where appropriate.

My coauthor, Phil Scott of the University of Ottawa, sadly passed away in December 2023,
sharp and cheerful until the end.
This version of the paper has therefore not benefited from Phil's eagle-eye.
For that reason, any mistakes or infelicities must be laid at the door of the first named author.

\section{Background results}

The definitions and results of this section are needed to prove
Theorem~\ref{them:locally-finite} and  Theorem~\ref{them:locally-matricial}.
We use the theory of Boolean inverse monoids;
see \cite{Wehrung} for the basics.

We need a few definitions from the theory of posets.
Let $P$ be a poset.
A subset $Q$ of $P$ is called an {\em order-ideal} if $q \in Q$ and $p \leq q$ implies that $p \in Q$.
If $P$ is a poset and $a \in P$, we write $a^{\downarrow}$ for the set $\{b \in P \colon b \leq a\}$.
This is called a {\em principal order-ideal}.
If $P$ and $Q$ are posets then a function $\theta \colon P \rightarrow Q$ is said to be {\em order-preserving}
if $p_{1} \leq p_{2}$ in $P$ implies that $\theta (p_{1}) \leq \theta (p_{2})$ in $Q$.
An {\em order-isomorphism} between two posets is an order-preserving bijection whose inverse is also order-preserving.

Let $A$ be an algebraic structure;
in this paper, $A$ will be either a group or a Boolean algebra or an MV-algebra.
Let $X = \{s_{1}, \ldots, s_{n}\}$ be a finite subset of $A$.
Then $\langle s_{1}, \ldots, s_{n}\rangle$ denotes the substructure of $A$ generated by $X$. 
We say that a structure is {\em locally finite} if every finitely generated substructure is finite.
We follow \cite{CDuM} in our definition of a locally finite MV-algebras.
The following result about Boolean algebras  is proved in \cite[Theorem 2, Chapter 11]{GH}.

\begin{lemma}\label{lem:BA-LF} 
Boolean algebras are locally finite. 
\end{lemma}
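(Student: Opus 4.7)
The plan is to show that the Boolean subalgebra generated by $n$ elements has at most $2^{2^{n}}$ elements, by the classical ``disjunctive normal form'' argument.

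First, given a finite subset $X = \{s_{1}, \ldots, s_{n}\}$ of a unital Boolean algebra $B$, I would introduce the $2^{n}$ \emph{minterms} (or Boolean constituents): for each $\varepsilon = (\varepsilon_{1}, \ldots, \varepsilon_{n}) \in \{0,1\}^{n}$, put
$$m_{\varepsilon} = s_{1}^{\varepsilon_{1}} \wedge \cdots \wedge s_{n}^{\varepsilon_{n}},$$
where $s_{i}^{1} = s_{i}$ and $s_{i}^{0} = \bar{s_{i}}$. A direct calculation using $s_{i} \wedge \bar{s_{i}} = 0$ shows that distinct minterms are orthogonal, and iterating $s_{i} \vee \bar{s_{i}} = 1$ shows $\bigvee_{\varepsilon} m_{\varepsilon} = 1$.

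Next I would let $M$ denote the set of all joins $\bigvee_{\varepsilon \in A} m_{\varepsilon}$ as $A$ ranges over subsets of $\{0,1\}^{n}$ (with the empty join giving $0$). Because the $m_{\varepsilon}$ are pairwise orthogonal and sum to $1$, the set $M$ is closed under finite joins, finite meets, and complementation: complementation sends $\bigvee_{\varepsilon \in A} m_{\varepsilon}$ to $\bigvee_{\varepsilon \notin A} m_{\varepsilon}$, meets distribute termwise using orthogonality, and joins are obvious. Each generator $s_{i}$ itself equals $\bigvee \{m_{\varepsilon} : \varepsilon_{i} = 1\}$. So $M$ is a unital Boolean subalgebra of $B$ containing $X$, and hence $\langle s_{1}, \ldots, s_{n}\rangle \subseteq M$. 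The reverse inclusion is immediate since each minterm already lies in $\langle s_{1}, \ldots, s_{n}\rangle$. Therefore $\langle s_{1}, \ldots, s_{n}\rangle = M$, which has cardinality at most $2^{2^{n}}$.

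There is no real obstacle here; the only mild subtlety is being careful with the case where some minterms equal $0$ (so different subsets $A$ may yield the same join), but this only makes the subalgebra smaller, not larger, and the bound $2^{2^{n}}$ still holds. The argument uses only the unital Boolean algebra axioms, so the lemma follows.
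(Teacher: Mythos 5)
Your argument is correct and is essentially the approach the paper relies on: the paper cites \cite[Chapter 11]{GH} rather than giving a proof, but the minterm description immediately following the lemma is exactly your disjunctive-normal-form decomposition, with the finiteness bound $2^{2^{n}}$ implicit. Nothing further is needed.
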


We shall need to know the details of the proof of the above.
Let $B$ be a Boolean algebra and let $X = \{p_{1},\ldots, p_{n}\} \subseteq B$.
If $X$ is empty, then $\langle X \rangle = \{0,1\}$.
In what follows, therefore, we shall assume that $X$ is not empty.
A non-zero product $x_{1} \ldots x_{n}$ where each $x_{i}$ is either $p_{i}$ or $\overline{p_{i}}$ is called a {\em minterm}.
The Boolean algebra $\langle X \rangle$ consists of $0$ and $1$, all the minterms (as defined above), and all joins of minterms.

An inverse monoid $S$ is said to be {\em Boolean} if it has all binary compatible joins, multiplication distributes over such joins,
and $\mathsf{E}(S)$ is a Boolean algebra.
If $S$ is a Boolean inverse monoid and $e$ is any non-zero idempotent,
then the local monoid $eSe$ is Boolean with identity $e$.
If $f \in \mathsf{E}(S)$ then the complement of $f$ in $eSe$ is $f' = e\bar{f}$.
A {\em morphism} of Boolean inverse monoids is a homomorphism that preserves
the identity and zero, and maps binary compatible joins to binary compatible joins.

Let $S$ be a Boolean inverse monoid with group of units $\mathsf{U}(S)$ and Boolean algebra of idempotents $\mathsf{E}(S)$.
The natural action of the group $\mathsf{U}(S)$ on the Boolean algebra $\mathsf{E}(S)$,
given by $g \cdot e = geg^{-1}$,
has the following properties.
The proofs of (1) and (2) below are easy:
\begin{enumerate}
\item $g \cdot (e \vee f) = g \cdot e \vee g \cdot f$.
\item $g\cdot (ef) = (g \cdot e)(g \cdot f)$.
\item $g \cdot \bar{e} = \overline{(g \cdot e)}$. 
This is proved from the following (well-known) result:
in a Boolean algebra, if $1 = e \vee f$ and $0 = ef$ then $f = \bar{e}$.
Thus $\bar{e}$ is uniquely determined by the two binary operations.
\end{enumerate}

The symmetric inverse monoid on the {\em non-empty} set $X$ is denoted by $\mathcal{I}(X)$.
This is a Boolean inverse monoid.
If $X$ has $n$ elements (a finite cardinal), then we write $\mathcal{I}_{n}$
and regard the elements, or letters, of $X$ as being elements of the set $\{1,\ldots, n\}$.
The idempotents in $\mathcal{I}_{n}$ are simply the identity functions $1_{A}$ defined on the subsets $A$ of $\{1, \ldots, n\}$.
We have that $1_{A} \,\mathscr{D} \, 1_{B}$ precisely when $A$ and $B$ have the same cardinality.
A semigroup isomorphic to a  finite direct product of finite symmetric inverse monoids is said to be {\em matricial}.

Let $S$ be a Boolean inverse semigroup.
A subset $I \subseteq S$, where $0 \in I$, is called an {\em additive ideal} if $I$ is an ideal of $S$ and
$I$ is closed under binary compatible joins.
If $S$ is a Boolean inverse monoid then both $\{0\}$ and $S$ are additive ideals
and  if these are the only ones we say that $S$ is {\em $0$-simplifying}.
A Boolean inverse semigroup which is both $0$-simplifying and fundamental is called {\em simple}.
The proof of the following can be found in \cite[Theorem 4.18]{Law5}.

\begin{proposition}\label{prop:structure-two} \mbox{}
\begin{enumerate}

\item The finite fundamental Boolean inverse monoids are the matricial inverse monoids.
 
\item The finite simple Boolean inverse monoids are isomorphic to the finite symmetric inverse monoids.

\end{enumerate}
\end{proposition}

An inverse monoid $S$ is said to be {\em factorizable} if every element is below a unit.
The curious terminology stems from the following result, which is well-known and easy to prove.

\begin{lemma}\label{lem:mhg}
Let $S$ be an inverse monoid.
Then $S$ is factorizable if and only if $S = \mathsf{U}(S)\mathsf{E}(S)$.
\end{lemma}
\begin{proof}
Suppose first that $S$ is factorizable.
Then, by definition, if $s \in S$ then $s \leq g$ where $g$ is a unit.
Thus $s = g \mathbf{d}(s) \in  \mathsf{U}(S)\mathsf{E}(S)$.
Conversely, if $S = \mathsf{U}(S)\mathsf{E}(S)$,
then if $s \in S$ we may write $s = ge$ where $g$ is a unit and $e$ is an idempotent.
Thus $s \leq g$.
\end{proof}

Factorizable inverse monoids are therefore determined by groups and meet semilattices.
We shall use these ingredients to build inverse submonoids of inverse monoids.

\begin{lemma}\label{lem:kamala} Let $S$ be an inverse monoid.
Let $G \leq \mathsf{U}(S)$ be a subgroup and let $E \subseteq \mathsf{E}(S)$ be a subsemilattice that contains the identity.
Then $T = GE$ is an inverse submonoid of $S$ (necessarily factorizable) if and only if $E$ is closed under the natural action by $G$.
Furthermore, if $T$ is a submonoid then $\mathsf{U}(T) = G$ and $\mathsf{E}(T) = E$.
\end{lemma}
\begin{proof} Suppose first that $E$ is closed under the natural action by $G$.
Let $ge$ and $hf$ be elements of $T$.
Then $(ge)(hf) = gh(h^{-1}eh)f$.
But $E$ is closed under the natural action by $G$ and so $(ge)(hf) \in T$.
Thus $T$ is closed under multiplication.
Let $ge \in T$.
Then $(ge)^{-1} = g^{-1}(geg^{-1})$.
But $E$ is closed under the natural action by $G$ and so $(ge)^{-1} \in T$.
We have therefore proved that $T$ is an inverse submonoid of $S$.
We now prove the converse. 
Suppose that $T = GE$ is an inverse submonoid.
Let $g \in G$ and $e \in E$.
Put $s = ge \in T$.
Then $ss^{-1} \in T$, since $T$ is an inverse submonoid of $S$, from which it follows that $geg^{-1} \in E$.
We now prove the last claims.
We thank the referee for the proofs.
Let $s \in T$.
Then $s = ge$, where $g \in G$ and $e \in E$.
Suppose that $s$ is a unit in $T$.
We have that $e = 1$.
We deduce that $s \in G$.
Suppose that $s$ is an idempotent.
Then $s^{-1} = s$.
Thus $s = ss = geg^{-1}$.
But, by assumption, this is an element of $E$.
\end{proof}

Part (1) of the following was proved in \cite{DA}.
The remaining proofs are easy.

\begin{lemma}\label{lem:metal}\mbox{}
\begin{enumerate}
\item The symmetric inverse monoid $\mathcal{I}(X)$ is factorizable if and only if $X$ is finite.
\item Finite direct products of factorizable inverse monoids are factorizable. 
\item Matricial monoids are factorizable.
\end{enumerate}
\end{lemma}

The following was proved in \cite{DA}.

\begin{lemma}\label{lem:fac-is-df} 
Every factorizable inverse monoid is directly finite.
\end{lemma}

We say that a Boolean inverse monoid $S$ has {\em $\mathscr{D}$-complementation} if $e \, \mathscr{D} \, f$ implies that $\bar{e} \, \mathscr{D} \, \bar{f}$
for any idempotents $e,f$.
The proof of the following can be found in \cite[Proposition 2.7]{LS}.

\begin{proposition}\label{prop:factorizable} Let $S$ be a Boolean inverse monoid.
Then $S$ is factorizable if and only if $S$ has $\mathscr{D}$-complementation.
\end{proposition}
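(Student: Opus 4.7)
The plan is to prove the two directions separately, using properties of the natural action of units on idempotents that were recorded just before Lemma~\ref{lem:spun}, namely that conjugation by a unit $g$ is a Boolean algebra automorphism of $\mathsf{E}(S)$, so $g\bar{e}g^{-1} = \overline{geg^{-1}}$.

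\textbf{Forward direction.} Suppose $S$ is factorizable and let $e\,\mathscr{D}\,f$, witnessed by some $a\in S$ with $\mathbf{d}(a)=e$ and $\mathbf{r}(a)=f$. Factorizability supplies a unit $g\in\mathsf{U}(S)$ with $a\leq g$. From the standard characterization of the natural partial order, $a = ge$, so $f = aa^{-1} = geg^{-1}$. Now $g\bar{e}$ has $\mathbf{d}(g\bar{e})=\bar{e}$ and $\mathbf{r}(g\bar{e}) = g\bar{e}g^{-1} = \overline{geg^{-1}} = \bar{f}$ by the Boolean-algebra properties of the natural action. Hence $\bar{e}\,\mathscr{D}\,\bar{f}$.

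\textbf{Converse direction.} Suppose $S$ has $\mathscr{D}$-complementation and let $s\in S$. Write $e=\mathbf{d}(s)$ and $f=\mathbf{r}(s)$, so $e\,\mathscr{D}\,f$ via $s$. By $\mathscr{D}$-complementation there is $t\in S$ with $\mathbf{d}(t)=\bar{e}$ and $\mathbf{r}(t)=\bar{f}$. I claim $s\perp t$. Indeed, $\mathbf{d}(s^{-1}t)\leq \mathbf{d}(t)=\bar{e}$ and $\mathbf{r}(s^{-1}t)\leq \mathbf{d}(s)=e$, and since an idempotent has equal domain and range, any idempotent below both $e$ and $\bar{e}$ is $0$; but $s^{-1}t$ is always below the largest idempotent in its $\mathscr{D}$-class — more concretely, $ts^{-1}$ has range below $\mathbf{r}(t)=\bar{f}$ and domain below $\mathbf{r}(s^{-1})=\mathbf{d}(s)$... cleaner: $s^{-1}t = (s^{-1}\mathbf{r}(s))(\mathbf{d}(t)t) = s^{-1}\,e\bar{e}\,t = 0$, and similarly $st^{-1}=0$. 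Hence $s$ and $t$ are orthogonal, so the join $g = s\oplus t$ exists in $S$ (as $S$ is Boolean). Then
\[
\mathbf{d}(g) = \mathbf{d}(s)\vee\mathbf{d}(t) = e\vee\bar{e} = 1, \qquad \mathbf{r}(g) = f\vee\bar{f}=1,
\]
so $g\in\mathsf{U}(S)$. Since $s\leq g$, we conclude that $S$ is factorizable.

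I do not expect a serious obstacle here; the one bookkeeping point worth paying attention to is the justification that $s^{-1}t = 0 = st^{-1}$ in the reverse direction, which is immediate once the domains and ranges of $s$ and $t$ are identified as complementary idempotents. Everything else follows formally from the Boolean structure and the characterization of the natural partial order.
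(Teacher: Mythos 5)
Your proof is correct, and it is essentially the standard argument; the paper itself does not reprove this statement but cites \cite[Proposition 2.7]{LS}, whose proof runs along the same lines (a unit above the witness conjugates complements to complements in one direction, and in the other an orthogonal join of $s$ with a witness between $\overline{\mathbf{d}(s)}$ and $\overline{\mathbf{r}(s)}$ yields a unit above $s$). The only repair needed is the bookkeeping in your orthogonality check: the identities to use are $s^{-1}=s^{-1}\mathbf{r}(s)$ and $t=\mathbf{r}(t)t$, giving $s^{-1}t=s^{-1}(f\bar{f})t=0$, and $s=s\,\mathbf{d}(s)$ and $t^{-1}=\mathbf{d}(t)t^{-1}$, giving $st^{-1}=s(e\bar{e})t^{-1}=0$; as written, $\mathbf{d}(t)t\neq t$ in general and the middle term $s^{-1}e\bar{e}t$ conflates $\mathbf{r}(s)=f$ with $e$, although the intended conclusion $s\perp t$ is correct and the rest of the argument (existence of the orthogonal join, $\mathbf{d}(g)=\mathbf{r}(g)=1$, hence $g$ a unit with $s\leq g$) goes through.
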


The following is important, but the proof given in  \cite[part (3) of Lemma 2.6]{LS} is wrong.

\begin{lemma}\label{lem:dedef}
Every factorizable Boolean inverse monoid is purely Dedekind finite.
In particular, $\mathscr{J} = \mathscr{D}$.
\end{lemma}
\begin{proof} Suppose that $e \, \mathscr{D} \, f \leq e$.
Let $e \stackrel{a}{\rightarrow} f$.
Observe that $a \in eSe$.
Thus $g = a \vee \bar{e}$ is a well-defined join.
But $\mathbf{d}(g) = e \vee \bar{e} = 1$.
However, by Lemma~\ref{lem:fac-is-df}, factorizable inverse monoids are directly finite.
Thus $\mathbf{r}(g) = 1$.
We have therefore proved that $f \vee \bar{e} = 1$.
If we multiply both sides of this equation by $e$,
we have that $ef = e$ and so $e \leq f$.
We deduce that $e = f$.
The final claim follows from \cite[Corollary 3.2.19]{Lawson1998}
\end{proof}

\begin{lemma}\label{lem:factor-mu} 
Let $S$ be a Boolean inverse monoid.
Then $S$ is factorizable if and only if $S/\mu$ is factorizable.
\end{lemma}
\begin{proof} Only one direction needs proving.
Let $e \, \mathscr{D} \, f$ in $S$ where $e$ and $f$ are idempotents.
Then $\mu (e) \, \mathscr{D} \, \mu (f)$.
The inverse monoid $S/\mu$ is Boolean by \cite[Proposition 3.4.5]{Wehrung}
and, by assumption, $S/\mu$ is factorizable.
Thus $\overline{\mu (e)}\, \mathscr{D} \, \overline{\mu (f)}$ by Proposition~\ref{prop:factorizable}.
But $\mu$ induces an isomorphism of the Boolean algebras of idempotents.
Thus $\mu (\bar{e})\, \mathscr{D} \, \mu (\bar{f})$ in $S/\mu$.
This implies that $\bar{e} \, \mathscr{D} \, \bar{f}$ in $S$.
Thus $S$ is factorizable by Proposition~\ref{prop:factorizable}.
\end{proof}

The following is our first important result.

\begin{proposition}\label{prop:finite-factor} 
All finite Boolean inverse monoids are factorizable.
\end{proposition}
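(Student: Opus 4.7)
The plan is to reduce to the fundamental case via Proposition~\ref{prop:factor-mu}, and then to exploit the explicit structure theorem for finite fundamental Boolean inverse monoids recalled in the introduction.

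First I would observe that since $S$ is finite, the quotient $S/\mu$ is also a finite Boolean inverse monoid, and it is fundamental by the defining property of $\mu$. By the second bullet of \cite[Theorem 4.18]{Law5} quoted in the introduction, $S/\mu$ is therefore matricial:
$$S/\mu \;\cong\; \mathscr{I}_{n_{1}} \times \ldots \times \mathscr{I}_{n_{r}}$$
for some finite sequence $n_{1}, \ldots, n_{r}$ of positive integers.

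Next I would verify that this direct product is factorizable. By the result of \cite{DA} quoted in the subsection on factorizable inverse monoids, each $\mathscr{I}_{n_{i}}$ is factorizable since $n_{i}$ is finite. Factorizability is preserved under finite direct products: if $(a_{1}, \ldots, a_{r})$ is an element of the product with $a_{i} \leq g_{i}$ for units $g_{i} \in \mathsf{U}(\mathscr{I}_{n_{i}})$, then $(a_{1}, \ldots, a_{r}) \leq (g_{1}, \ldots, g_{r})$, which is a unit of the product. Hence $S/\mu$ is factorizable.

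Finally, Proposition~\ref{prop:factor-mu} lets me lift factorizability from $S/\mu$ back to $S$, completing the proof. The argument is really just a bookkeeping consequence of the two previous propositions together with the structure theorem, so there is no genuine obstacle; the only subsidiary point needed is that $S/\mu$ is a Boolean inverse monoid whose Boolean algebra of idempotents is canonically isomorphic to that of $S$ (used implicitly in Proposition~\ref{prop:factor-mu}), which is a standard fact about the maximum idempotent-separating congruence.
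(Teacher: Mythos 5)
Your proof is correct and follows essentially the same route as the paper: pass to the fundamental quotient $S/\mu$, invoke the structure theorem to write it as a finite direct product of finite symmetric inverse monoids (each factorizable, and factorizability passes to finite products), and then lift back via Proposition~\ref{prop:factor-mu}. The extra details you supply (the product argument and the remark about the idempotents) are fine but add nothing beyond the paper's argument.
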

\begin{proof} 
Let $S$ be a finite Boolean inverse monoid.
Then $S/\mu$ is a finite fundamental Boolean inverse monoid.
Thus by Proposition~\ref{prop:structure-two}, it is matricial
and so it is factorizable by  Lemma~\ref{lem:metal}. 
The result now follows by Lemma~\ref{lem:factor-mu}.
\end{proof}

Let $B$ be a Boolean algebra.
We say that $B'$ is a {\em subalgebra} of $B$ if $B' \subseteq B$  contains the 0 and 1 and is closed
under joins, meets and complements.
Now, let $S$ and $T$ be Boolean inverse monoids where $T$ is an inverse submonoid of $S$.
We say that $T$ is a {\em subalgebra} of $S$ if two conditions are satisfied:
\begin{enumerate}
\item If $a,b \in T$ and $a \sim b$ then $a \vee b \in T$, where $a \vee b$ means the join calculated in $S$.
\item $\mathsf{E}(T)$ is a subalgebra of $\mathsf{E}(S)$;
because we are assuming that $T$ is closed under joins and products, in the presence of (1) this is equivalent to requiring that if  
$e \in \mathsf{E}(T)$ then $\bar{e} \in \mathsf{E}(T)$.
\end{enumerate}
Wehrung \cite[Definition 3.1.7]{Wehrung} uses the terminology `additive inverse subsemigroup'.
The justification for our use of the term `subalgebra' is based on \cite[Corollary 3.2.7]{Wehrung}.

Let $S$ be a Boolean inverse monoid and $X \subseteq S$.
Then $X^{\vee}$ is the set of all joins of finite compatible subsets of $S$.
Clearly, $X \subseteq X^{\vee}$.

\begin{lemma}\label{lem:gillian} Let $S$ be a Boolean inverse monoid.
Let $T$ be an inverse submonoid of $S$ having the same zero and such that $\mathsf{E}(T)$ is a Boolean subalgebra of $\mathsf{E}(S)$.
Then $T^{\vee}$ is a subalgebra of $S$.
\end{lemma}
\begin{proof} It is enough to prove that $T^{\vee}$ is a Boolean inverse submonoid of $S$.
It is clear that $\mathsf{E}(T^{\vee}) = \mathsf{E}(T)$, 
by construction $T^{\vee}$ has all joins, 
and the fact that multiplication
distibutes over binary joins is inherited from $S$.
Thus $T^{\vee}$ is a Boolean inverse submonoid of $S$ and a subalgebra.
\end{proof}

The following lemma tells us how to construct finite Boolean inverse subalgebras of Boolean inverse monoids in certain
circumstances.

\begin{lemma}\label{lem:nb} Let $S$ be a Boolean inverse monoid.
\begin{enumerate}

\item Let $G \leq \mathsf{U}(S)$ be a subgroup and let $E \subseteq \mathsf{E}(S)$ be a finite Boolean subalgebra.
Then there is a finite Boolean subalgebra $B$ of $\mathsf{E}(S)$ such that $E \subseteq B$ and $B$ is invariant under the natural action of $G$.

\item Let $G \leq \mathsf{U}(S)$ be a finite subgroup and let $E \subseteq \mathsf{E}(S)$ be a finite Boolean subalgebra.
Then there is a finite Boolean inverse subalgebra $T$ of $S$ 
such that $GE \subseteq T$.

\end{enumerate}
\end{lemma}
\begin{proof} (1) Let $B$ be the Boolean subalgebra of $\mathsf{E}(S)$ generated by the finite set 
$$\{geg^{-1} \colon g \in G, e \in E\}.$$
This is finite by Lemma~\ref{lem:BA-LF}.
By construction, $B$ is closed under the natural action by $G$.

(2) Construct $B$ from $E$ using part (1) above.
Then $F = GB$ is an inverse submonoid of $S$, by Lemma~\ref{lem:kamala},
with group of units $G$ and set of idempotents  $B$.
By Lemma~\ref{lem:gillian}, $F^{\vee}$ is a subalgebra of $S$.
\end{proof}

Let $S$ be an inverse semigroup.
We say that $S$ is a {\em meet-semigroup} if each pair of elements has a meet.
A function $\phi \colon S \rightarrow \mathsf{E}(S)$ is called a {\em fixed-point operator} if $\phi (s)$
is the largest idempotent below $s$.
Leech \cite{Leech} proved  
that an inverse semigroup has all binary meets if and only if it is equipped with a fixed-point operator;
in fact, if a fixed-point operator exists, then $a \wedge b = \phi (ab^{-1})b$.
In the case of inverse meet-monoids,
$\phi (s) = s \wedge 1$.

A non-zero element $a$ of an inverse semigroup is said to be an {\em infinitesimal} if $a^{2} = 0$ \cite{Lawson2016}.
Observe that the non-zero element $a$ is an infinitesimal if and only if $\mathbf{d}(a) \perp \mathbf{r}(a)$.
A Boolean inverse semigroup is said to be {\em basic} if each non-zero element can 
be written as a join of an idempotent and a finite number of infinitesimals.

A Boolean inverse monoid $S$ is said to be {\em piecewise factorizable} if every element $s \in S$ can be written
$s = \bigvee_{i=1}^{n} g_{i}e_{i}$ where the $g_{i}$ are units and the $e_{i}$ are idempotents.

For part (1) of the following, see \cite[Remark 4.29]{Lawson2016}.
The proofs of the other parts are straightforward.

\begin{lemma}\label{lem:basic} \mbox{}
\begin{enumerate}
\item Finite symmetric inverse monoids are basic.
\item A finite direct product of basic Boolean inverse monoids is a basic Boolean inverse semigroup.
\item Matricial monoids are basic.
\end{enumerate}
\end{lemma}

The following is proved as \cite[Lemma 4.30]{Lawson2016}.

\begin{proposition}\label{prop:basic-properties} Let $S$ be a basic Boolean inverse monoid.
Then $S$ is:
\begin{enumerate}
\item A meet-monoid; this means that basic Boolean inverse monoids are equipped with a fixed-point operator $\phi$.
\item Piecewise factorizable.
\item Fundamental.
\end{enumerate}
\end{proposition}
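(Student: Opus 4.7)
I treat the three parts in sequence, using throughout a basic decomposition $s = e \vee a_{1} \vee \cdots \vee a_{n}$ of a non-zero element, where $e$ is an idempotent and each $a_{i}$ is an infinitesimal (so non-zero by definition).

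For (1), my plan is to verify that a fixed-point operator exists on all of $S$ and then invoke Proposition~\ref{prop:leech}. I claim that $e$ is the largest idempotent below $s$, so $\phi(s) = e$. Clearly $e \leq s$. Conversely, if $f$ is any idempotent with $f \leq s$, then $f = sf = ef \vee \bigvee_{i} a_{i}f$, and each $a_{i}f$ lies beneath the infinitesimal $a_{i}$, so by Lemma~\ref{lem:order-ideal-infinitesimal} is either zero or an infinitesimal. But each $a_{i}f$ also lies beneath the idempotent $f$; unpacking this through the natural partial order gives $a_{i}f = f \cdot \mathbf{d}(a_{i}f) = f\,\mathbf{d}(a_{i})\,f$, which is an idempotent. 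The two observations are incompatible unless $a_{i}f = 0$. Hence $f = ef \leq e$, establishing (FPO1) and (FPO2). Proposition~\ref{prop:leech} now gives that $S$ is a meet-monoid.

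For (2), I would express each summand of a basic decomposition as a product of a unit and an idempotent. The idempotent piece is $e = 1 \cdot e$. For each infinitesimal $a_{i}$, Corollary~\ref{cor:involution} produces an involution $g_{i} = a_{i} \vee a_{i}^{-1} \vee \overline{\mathbf{e}(a_{i})}$, which is in particular a unit of $S$. Expanding $g_{i}\mathbf{d}(a_{i})$ by distributivity and using $(a_{i}^{-1})^{2} = 0$ together with $\overline{\mathbf{e}(a_{i})} \cdot \mathbf{d}(a_{i}) = 0$ yields $g_{i}\mathbf{d}(a_{i}) = a_{i}$. Therefore $s = 1 \cdot e \vee \bigvee_{i=1}^{n} g_{i}\mathbf{d}(a_{i})$ is a piecewise-factorizable decomposition.

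For (3), I appeal to Proposition~\ref{prop:infinitesimal}, which reduces fundamentality to the statement that every non-idempotent element is above an infinitesimal. If $s$ is non-idempotent then $s \neq 0$, so it admits a basic decomposition $s = e \vee a_{1} \vee \cdots \vee a_{n}$. At least one infinitesimal must be present ($n \geq 1$), for otherwise $s = e$ is itself idempotent. The element $a_{1}$ is then an infinitesimal beneath $s$, so the criterion of Proposition~\ref{prop:infinitesimal} is satisfied and $S$ is fundamental. The principal obstacle of the whole proof is the calculation in (1): one has to see that the condition $a_{i}f \leq f$, read through the definition of the natural partial order in an inverse semigroup, forces $a_{i}f$ to be an idempotent, which then clashes with the infinitesimal/zero dichotomy of Lemma~\ref{lem:order-ideal-infinitesimal}. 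Once this is secured, (2) and (3) are essentially bookkeeping on top of the earlier results on infinitesimals and involutions.
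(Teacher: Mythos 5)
Your proposal is correct and follows essentially the same route as the paper, whose proof consists precisely of citing Proposition~\ref{prop:leech}, Corollary~\ref{cor:involution} and Proposition~\ref{prop:infinitesimal} for the three parts respectively; you have simply supplied the details the paper leaves implicit (in particular the verification that the idempotent summand of a basic decomposition is the greatest idempotent below the element, and that $g_{i}\mathbf{d}(a_{i})=a_{i}$ for the associated involution). All of these checks are sound.
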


For finite Boolean inverse monoids, however, being basic and being fundamental are the same thing.

\begin{proposition}\label{prop:covid} Let $S$ be a finite Boolean inverse monoid.
Then $S$ is fundamental if and only if $S$ is basic.
\end{proposition}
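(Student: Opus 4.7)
The plan is to handle the two implications separately. The reverse direction, that basic implies fundamental, requires no finiteness hypothesis and is immediate from Proposition~\ref{prop:basic-properties}(3), so the entire content lies in showing that a finite fundamental Boolean inverse monoid is basic.

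For that direction, my strategy is to reduce to the case of a single symmetric inverse monoid via the classification of finite fundamental Boolean inverse monoids stated in the introduction (\cite[Theorem 4.18]{Law5}): any such $S$ is isomorphic to a finite direct product $\mathscr{I}_{n_{1}} \times \ldots \times \mathscr{I}_{n_{r}}$. Since Lemma~\ref{lem:biden} tells us that the class of basic Boolean inverse semigroups is closed under finite direct products, it suffices to verify that each finite symmetric inverse monoid $\mathscr{I}_{n}$ is basic.

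To show $\mathscr{I}_{n}$ is basic, I would take an arbitrary $\alpha \in \mathscr{I}_{n}$, viewed as a partial bijection on $\{1,\ldots,n\}$, and decompose it into its singleton restrictions $\alpha_{i} = \{(i,\alpha(i))\}$ indexed by $i \in \mathbf{d}(\alpha)$. Because $\alpha$ is injective, distinct singletons have disjoint domains and disjoint ranges, and so are pairwise orthogonal, which means their join recovers $\alpha$. Each $\alpha_{i}$ is either a fixed point (when $\alpha(i) = i$), in which case it is an idempotent, or else satisfies $\mathbf{d}(\alpha_{i}) = \{i\}$ and $\mathbf{r}(\alpha_{i}) = \{\alpha(i)\}$ with $i \neq \alpha(i)$, so $\mathbf{d}(\alpha_{i}) \perp \mathbf{r}(\alpha_{i})$ and $\alpha_{i}$ is an infinitesimal by Lemma~\ref{lem:choco}. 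Collecting the idempotent singletons into a single idempotent and leaving the remaining (finitely many) infinitesimals separate gives the required expression.

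The proof is almost entirely an unpacking of definitions once the structure theorem is in hand; the only step that needs the slightest care is recognizing that a non-fixed singleton $\{(i,\alpha(i))\}$ really is an infinitesimal rather than merely an atom, which is where Lemma~\ref{lem:choco} is used. The conceptual reason for the theorem is that in a finite fundamental Boolean inverse monoid the atoms of $S$ split neatly into idempotent atoms and infinitesimal atoms, matching the two kinds of arrows in the underlying principal groupoid $\mathsf{at}(S)$.
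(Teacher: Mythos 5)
Your proof is correct, but it takes a genuinely different route from the paper's. The paper argues intrinsically: since $S$ is fundamental, Proposition~\ref{prop:infinitesimal} puts an infinitesimal below every non-idempotent element; using the fixed-point operator $\phi$ (available because finite Boolean inverse monoids have meets), one splits off $\phi(a)$ from $a$, notes that the only idempotent below $a \setminus \phi(a)$ is $0$, and then repeatedly peels off infinitesimals, finiteness guaranteeing termination, so that $a$ is a join of an idempotent and finitely many infinitesimals. You instead invoke the classification of finite fundamental Boolean inverse monoids from \cite[Theorem 4.18]{Law5} and reduce, via Lemma~\ref{lem:biden}, to the explicit singleton decomposition of a partial bijection in a single $\mathscr{I}_{n}$; this is essentially the concrete argument the paper itself alludes to when it remarks that Corollary~\ref{cor:covid-19} is also clear from the properties of finite symmetric inverse monoids together with Lemma~\ref{lem:biden}. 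Your route is more explicit and computational, but it leans on the external structure theorem, a substantially heavier input than Proposition~\ref{prop:infinitesimal}, and it reverses the paper's logical order (the paper deduces Corollary~\ref{cor:covid-19} from the proposition, whereas you in effect prove the corollary's content first); there is no circularity, since the classification is proved in \cite{Law5} rather than in this paper. The small points you leave implicit---that being basic is an isomorphism-invariant property, and that the singleton restrictions of a partial bijection are pairwise orthogonal with join the original element---are indeed routine, so the argument stands.
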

\begin{proof} By Proposition~\ref{prop:basic-properties}, every basic Boolean inverse semigroup
is fundamental.
To prove the converse, 
let $S$ be a finite fundamental Boolean inverse monoid.
Then $S$ is isomorphic to a matrical monoid by part (1) of Proposition~\ref{prop:structure-two}.
We now use Lemma~\ref{lem:basic} to deduce that $S$ is basic.
\end{proof}

The idempotent which appears in the definition of a basic Boolean inverse monoid
has a natural  interpretation.

\begin{lemma} Let $S$ be a basic Boolean inverse monoid.
If $a \in S$ and 
$$a = a_{1} \vee \ldots \vee a_{m} \vee e,$$
where the $a_{1}, \ldots , a_{m}$ are infinitesimals and $e$ is an idempotent, then $e = \phi (a)$.
\end{lemma}
\begin{proof} Clearly,  $e \leq a$ and so $e \leq \phi (a)$.
Let $f \leq a$ be any idempotent.
Then $f = af$ and so 
$f = a_{1}f \vee \ldots \vee a_{m}f \vee ef$.
Each $a_{i}f$ is an idempotent (since it is less than an idempotent) which is either zero or an infinitesimal.
We show that they must be zero.
Let $b \leq f$ where $b$ is either zero or an infinitesimal.
Then $b = bf = fb$.
But $b$ is an idempotent.
Thus $b = fbbf = 0$.
It follows that $f = ef$ and so $f \leq e$.
This proves that $e = \phi (a)$.
\end{proof}

\section{Characterization theorems}

In this section, we characterize the Boolean inverse monoids of finite type and those which are AF.
The characterization of UHF monoids is more complex and left until Section~4.

\begin{theorem}[Boolean inverse monoids of finite type]\label{them:locally-finite} Let $S$ be a countably infinite Boolean inverse monoid.
Then $S$ is of finite type if and only if $S$ is factorizable and its group of units is locally finite.
\end{theorem}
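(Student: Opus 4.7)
The plan is to treat the two directions separately. The forward implication is essentially bookkeeping: subalgebras inherit units and factorizability from the finite pieces. The reverse implication is the substantive one, and its strategy is to enumerate $S$ and inductively build the finite Boolean inverse subalgebras by picking up one element of $S$ at each stage and then closing off under everything that is required.

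For the forward direction, assume $S = \bigcup_{i=1}^{\infty} S_i$ with each $S_i$ a finite Boolean inverse monoid that is a subalgebra of $S$. Given $s \in S$, choose $i$ with $s \in S_i$; by Proposition~\ref{prop:finite-factor}, $S_i$ is factorizable, so $s \leq g$ for some unit $g$ of $S_i$, and $g$ is also a unit of $S$ since $S_i$ and $S$ share identity. Hence $S$ is factorizable. For local finiteness of $\mathsf{U}(S)$, given finitely many units $g_1, \ldots, g_n \in \mathsf{U}(S)$, choose $i$ so that $g_1, \ldots, g_n \in S_i$; each $g_j$ is a unit of $S_i$ because its inverse lies in the inverse submonoid $S_i$, so the subgroup $\langle g_1, \ldots, g_n \rangle$ sits inside the finite group $\mathsf{U}(S_i)$.

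For the reverse direction, enumerate $S = \{s_1, s_2, \ldots\}$ and inductively construct finite subgroups $G_1 \subseteq G_2 \subseteq \cdots$ of $\mathsf{U}(S)$ together with finite unital Boolean subalgebras $E_1 \subseteq E_2 \subseteq \cdots$ of $\mathsf{E}(S)$ such that at each stage $n$: (i) $E_n$ is closed under the natural action of $G_n$; (ii) $\phi(g) \in E_n$ for every $g \in G_n$, where $\phi$ is the fixed-point operator of $S$; and (iii) $s_n \in G_n E_n$. In the inductive step, factorizability gives a unit $h_n$ with $s_n \leq h_n$; local finiteness of $\mathsf{U}(S)$ lets me enlarge $G_{n-1} \cup \{h_n\}$ to a finite subgroup $G_n$; I then adjoin $\mathbf{d}(s_n)$ and the finite set $\{\phi(g) : g \in G_n\}$ to $E_{n-1}$ (which generates a finite Boolean subalgebra by Lemma~\ref{lem:BA-LF}), and finally apply Lemma~\ref{lem:drongo} to obtain a finite $G_n$-invariant Boolean enlargement $E_n$. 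By Lemma~\ref{lem:kamala}, $F_n := G_n E_n$ is then a finite factorizable inverse submonoid with $\mathsf{E}(F_n) = E_n$, and I set $S_n := F_n^{\vee}$.

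It then remains to verify that each $S_n$ is a finite Boolean inverse subalgebra of the meet-monoid $S$ and that $\bigcup_n S_n = S$. Finiteness and the fact that $S_n$ is a subalgebra in the non-meet sense follow from Lemma~\ref{lem:vee-closure}. Closure under meets in $S$ is where property (ii) pays off: Corollary~\ref{cor:factorizable-meets} gives exactly that $F_n^{\vee}$ is meet-closed because $\phi(g) \in F_n$ for every unit $g \in G_n$. Property (iii) ensures $s_n \in F_n \subseteq S_n$, so $\bigcup_n S_n = S$. The main obstacle is coordinating the three closure demands on $E_n$ at each step: it must simultaneously absorb the new idempotent $\mathbf{d}(s_n)$, absorb the fixed-points $\phi(g)$ to secure meet-closure via Corollary~\ref{cor:factorizable-meets}, and absorb all conjugates $geg^{-1}$ under $G_n$ to make $G_n E_n$ closed under multiplication via Lemma~\ref{lem:kamala}, all while remaining finite. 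It is the combined force of local finiteness of $\mathsf{U}(S)$, local finiteness of unital Boolean algebras, and the normalization trick of Lemma~\ref{lem:drongo} that makes this simultaneous closure feasible.
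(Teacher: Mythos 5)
Your proposal is correct and follows essentially the same route as the paper: both directions use Proposition~\ref{prop:finite-factor} for factorizability of the finite pieces, and the converse builds finite subgroups and finite Boolean subalgebras, adjoins the fixed points $\phi(g)$, normalizes via Lemma~\ref{lem:drongo}, and passes to $F_n^{\vee}$ using Lemma~\ref{lem:kamala}, Lemma~\ref{lem:vee-closure} and Corollary~\ref{cor:factorizable-meets}. The only (inessential) difference is that you enumerate $S$ itself and extract a unit $h_n$ above $s_n$ at each stage, whereas the paper enumerates $\mathsf{U}(S)$ and $\mathsf{E}(S)$ separately and then observes at the end that factorizability puts every $a = ge$ into some $F_i$.
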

\begin{proof} We prove the easy direction first.
Suppose that $S$ is of finite type.
Then $S = \bigcup_{i=1}^{\infty} S_{i}$ where $S_{1} \subseteq S_{2} \subseteq S_{3} \subseteq \ldots$, 
and each $S_{i}$ is a finite subalgebra of $S$.
Let $a \in S$.
Then $a \in S_{i}$, for some $i$.
All finite Boolean inverse monoids $S_{i}$ are factorizable by Proposition~\ref{prop:finite-factor}.
Thus $a \leq g$ where $g$ is a unit of $S_{i}$.
However, $g$ is also a unit of $S$ since $S_{i}$ and $S$ have the same identity.
Thus $S$ is also factorizable. 
Now, let $X$ be any finite subset of the group of units of $S$.
Then $X \subseteq S_{i}$, for some $i$, where $S_{i}$ is a finite subalgebra of $S$.
It follows that $X$ are also units of $S_{i}$.
Hence, the subgroup generated by $X$ is finite.
We have therefore proved that the group of units of $S$ is locally finite.

We now prove the converse.
Let $S$ be a countably infinite Boolean inverse monoid
which is factorizable and whose group of units is locally finite.
We prove that $S$ is of finite type.
Since $S$ is countably infinite, both the group $\mathsf{U}(S)$ and the Boolean algebra $\mathsf{E}(S)$ are countable.
We may therefore list the elements of both:
let $\mathsf{U}(S) = \{g_{1} = 1, g_{2}, \ldots \}$ and $\mathsf{E}(S) = \{e_{1} = 1, e_{2} \ldots \}$.
Define $G_{i} = \langle g_{1}, \ldots, g_{i} \rangle$ and $E_{i} = \langle e_{1}, \ldots, e_{i}\rangle$.
Since $\mathsf{U}(S)$ is locally finite by assumption, the groups $G_{i}$ are all finite. 
The Boolean algebras $E_{i}$ are finite by Lemma~\ref{lem:BA-LF} since they are finitely generated.
By construction $G_{1} \subseteq G_{2} \subseteq \ldots$ and $E_{1} \subseteq E_{2} \subseteq \ldots$.
By Lemma~\ref{lem:nb},  we may enlarge each Boolean algebra $E_{i}$ to a finite Boolean algebra $E_{i}'$
where $E_{i}'$ is closed under the natural action by $G_{i}$ and such that $E_{1}' \subseteq E_{2}' \subseteq \ldots$:
to do this, first enlarge $E_{1}$ to $E_{1}'$ and then enlarge $\langle E_{1}' \cup E_{2} \rangle$ to get $E_{2}'$
and then enlarge $\langle E_{2}' \cup E_{3} \rangle$ to get $E_{3}'$ and so on.
We relabel $E_{i}'$ as $E_{i}$. 
We may therefore assume that each $E_{i}$ is closed under the natural action of $G_{i}$.
By Lemma~\ref{lem:kamala},
there is an increasing sequence $F_{i} = G_{i}E_{i}$
of finite inverse submonoids where $F_{1} \subseteq F_{2} \subseteq F_{3} \subseteq \ldots$,
each $F_{i}$ is factorizable, and $\mathsf{E}(F_{i})$ is a subalgebra of $S$.
By Lemma~\ref{lem:nb},
there is an increasing sequence $F_{1}^{\vee} \subseteq F_{2}^{\vee} \subseteq \ldots$
of finite Boolean inverse subalgebras of $S$.
Let $a \in S$ be arbitrary.
Then $a = ge$ for some unit $g$ and idempotent $e$ by factorizability.
It follows that $a \in F_{i} \subseteq F_{i}^{\vee}$ for some $i$.
We have therefore proved that $S$ is of finite type.
\end{proof}

The referee asked whether Boolean inverse monoids of finite type might be meet-monoids.
We think not, although we do not have a counterexample.
The following example does show that you have to be careful with meets.

\begin{example}\label{ex:counterexample}
{\em  Consider the finite Boolean inverse meet-monoid on 3 letters $\mathcal{I}_{3}$.
Denote the identity of this monoid by $1$ and its zero by $0$.
Put $g = (2,3)$ a transposition, and $G = \{1,g\}$.
Then $G$ is a subgroup of the group of units of $\mathcal{I}_{3}$.
If we adjoin the zero, we get that $G^{0}$ is a group with zero adjoined contained in $\mathcal{I}_{3}$.
The meet $g \wedge 1$ calculated  in $G^{0}$ is $0$, 
but $g \wedge 1$ calculated in $\mathcal{I}_{3}$ is the idempotent $e = 1_{\{1\}}$.
}
\end{example}

We now turn to the characterization of AF inverse monoids, which requires the following result first.
We have followed the referee's suggestion and only assumed piecewise factorizability rather than that of being basic.

\begin{lemma}\label{lem:basic-factorizable} Let $S$ be a piecewise factorizable Boolean inverse monoid whose group of units is locally finite.
Then $S$ is factorizable.
\end{lemma}
\begin{proof} Let $a \in S$.
By definition,  
we may write
$a = \bigvee_{i=1}^{n} g_{i}e_{i}$ where the $g_{i}$ are units and the $e_{i}$ are idempotents. 
Put $G = \langle g_{1}, \ldots, g_{n} \rangle$, which is finite since the group of units of $S$ is locally finite. 
Put $\mathscr{E} = \{e_{1}, \ldots, e_{n}\}$.
By Lemma~\ref{lem:nb}, there is a finite Boolean subalgebra $B$ of $\mathsf{E}(S)$ containing $\mathscr{E}$
invariant under the natural action by $G$.
By Lemma~\ref{lem:kamala},
$GB$ is a  finite inverse submonoid of $S$, which is factorizable by construction
and has as its semilattice of idempotents the Boolean subalgebra $B$.
Put $T = (GB)^{\vee}$.
Then by Lemma~\ref{lem:nb}, $T$ is a finite Boolean inverse submonoid containing $a$.
Thus, in particular, $T$ is a factorizable Boolean inverse monoid by Proposition~\ref{prop:finite-factor}.
It follows that $a$ is beneath a unit of $T$ and so beneath a unit of $S$.
The result now follows.
\end{proof}

The following result solves the problem of embedding finite subalgebras into finite fundamental subalgebras.

\begin{lemma}\label{lem:needed-to-prove-result} Let $S$ be a basic Boolean inverse monoid with a locally finite group of units.
Let $T$ be a finite subalgebra of $S$.
Then there is a finite fundamental subalgebra $T'$ of $S$ such that $T \subseteq T'$.
\end{lemma}
\begin{proof} Since $T$ is a finite Boolean inverse monoid, we know that it is factorizable by Proposition~\ref{prop:finite-factor}.
By Lemma~\ref{lem:mhg}, we can therefore write $T = \mathsf{U}(T) \mathsf{E}(T)$
where $\mathsf{U}(T)$ is a finite subgroup of $\mathsf{U}(S)$ and $\mathsf{E}(T)$
is a finite Boolean subalgebra of the Boolean algebra $\mathsf{E}(S)$.
By Lemma~\ref{lem:kamala}, $\mathsf{E}(T)$ is closed under the natural action by $\mathsf{U}(T)$. 

By assumption, $S$ is basic.
Thus for each 
$g \in \mathsf{U}(T)$, we may write $g = a_{1} \vee \ldots \vee a_{m} \vee e$
where the $a_{1},\ldots, a_{m}$ are infinitesimals in $S$ and $e$ is an idempotent in $S$.
Since $g$ is a unit, $\mathbf{d}(g) = 1$.
Thus  
$1 = \mathbf{d}(a_{1}) \vee \ldots \vee \mathbf{d}(a_{m}) \vee e$
using standard properties of Boolean inverse monoids.
Define $\iota (g) = \{\mathbf{d}(a_{1}), \ldots, \mathbf{d}(a_{m}), e\}$.
Put $\mathsf{E}(T)'$ equal to the set $\mathsf{E}(T)$ together with the sets $\iota (g)$ for each of the $g \in \mathsf{U}(T)$. 
Then adjoin all the conjugates of these idempotents by elements of $\mathsf{U}(T)$.
Now, define $B$ to be the finite Boolean algebra generated by these idempotents.
Thus 
$$\mathsf{E}(T) \cup \left(\bigcup_{g \in \mathsf{U}(T)} \iota (g) \right) \subseteq B.$$
By Lemma~\ref{lem:kamala},
$T_{1}  = \mathsf{U}(T)B$ is a factorizable inverse submonoid of $S$ with $B$ as its Boolean algebra of idempotents
and $\mathsf{U}(T)$ as its group of units.
Put $T' = T_{1}^{\vee}$.
By Lemma~\ref{lem:nb},
this is a finite subalgebra of $S$ that contains $T_{1}$.
It remains to show that $T'$ is fundamental.
We shall do this by showing that $T'$ is basic and then using Proposition~\ref{prop:covid}.
Each element of $a \in T'$ is of the form $a = \bigvee_{i=1}^{m} g_{i}e_{i}$
where $g_{i} \in \mathsf{U}(T)$ and $e_{i} \in B$.
Thus if we can show that each $g_{i}e_{i}$ is a join of infinitesimals and an idempotent
then the element $a$ will also be a join of infinitesimals and an idempotent, and we shall be done.
In what follows, therefore, let $g \in \mathsf{U}(T)$ and $f \in B$.
We prove that $gf$ can be written as a sum of infinitesimals and an idempotent.
By assumption $\iota (g) \subseteq B$.
It follows that in $T'$, we can write $g = a_{1} \vee  \ldots \vee a_{m} \vee e$
where $a_{1}, \ldots, a_{m} \in \mathsf{U}(S)B$ are infinitesimals and $e \in B$.
If $f \in B$ then $gf = a_{1}f \vee \ldots \vee a_{m}f \vee ef$.
Any element less than an infinitesimal is either zero or an infinitesimal.
If we weed out the products which are zero, we get a representation of $gf$ as a sum of infinitesimals and an idempotent.
We have therefore proved that $T'$ is basic. 
\end{proof}

We now come to our second main theorem.

\begin{theorem}[AF monoids]\label{them:locally-matricial} 
Let $S$ be a countably infinite Boolean inverse monoid.
Then $S$ is an AF monoid if and only if $S$ is basic and its group of units is locally finite.
\end{theorem}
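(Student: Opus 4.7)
The forward direction is straightforward. If $S = \bigcup_i S_i$ with each $S_i$ a finite fundamental (hence matricial) Boolean inverse subalgebra, then by Corollary~\ref{cor:covid-19} each $S_i$ is basic. Any $a \in S$ lies in some $S_i$ and admits a decomposition there as a join of an idempotent and finitely many infinitesimals; this decomposition is preserved in $S$ because $S_i$ is a subalgebra (joins of compatible elements of $S_i$ coincide with joins in $S$) and an element $b$ with $b^2 = 0$ in $S_i$ still satisfies $b^2 = 0$ in $S$. Hence $S$ is basic. For local finiteness, any finite $X \subseteq \mathsf{U}(S)$ sits inside some $S_i$; because $S_i$ shares the identity of $S$ and is an inverse submonoid, $X \subseteq \mathsf{U}(S_i)$, which is finite, so $\langle X \rangle$ is finite.

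For the converse, assume $S$ is a countable basic Boolean inverse meet-monoid with locally finite group of units. By Proposition~\ref{prop:basic-properties}, $S$ is fundamental, and by Proposition~\ref{prop:basic-factorizable} it is factorizable, so Theorem~\ref{them:locally-finite} yields an increasing union $S = \bigcup_i T_i$ with $T_i$ finite subalgebras. By Proposition~\ref{prop:covid}, a finite Boolean inverse monoid is fundamental iff basic (and by the introduction the finite fundamental Boolean inverse monoids are precisely the matricial ones), so it suffices to show the $T_i$ can be chosen fundamental. The plan is to refine the construction in the proof of Theorem~\ref{them:locally-finite} by exploiting fundamentality of $S$: at stage $i$, for every non-identity $g$ in the finite subgroup $G_i \le \mathsf{U}(S)$ built at that stage, Lemma~\ref{lem:spun} supplies an idempotent $f_g \in \mathsf{E}(S)$ with $g f_g g^{-1} \ne f_g$. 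Adjoining all such witnesses to the generators of $E_i$ and re-invariantizing under $G_i$ via Lemma~\ref{lem:drongo} keeps $E_i$ finite by Lemma~\ref{lem:BA-LF}. Setting $F_i = G_i E_i$ and $T_i = F_i^{\vee}$, Lemma~\ref{lem:vee-closure} together with Corollary~\ref{cor:factorizable-meets} exhibits $T_i$ as a finite meet-subalgebra of $S$ with $\mathsf{E}(T_i) = E_i$, on which $G_i$ acts faithfully by construction. Applying Lemma~\ref{lem:spun} then gives that $T_i$ is fundamental, and $S = \bigcup_i T_i$ follows exactly as in the proof of Theorem~\ref{them:locally-finite}.

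The main obstacle is verifying that the faithful $G_i$-action on $E_i$ actually certifies fundamentality of $T_i$, i.e., that $\mathsf{U}(T_i) = G_i$. A priori, joins of compatible elements of $F_i = G_i E_i$ could produce new units: decomposing a candidate unit $u \in T_i$ as $u = \bigvee_k u p_k$ over the atoms $p_k$ of $E_i$ shows that each $u p_k$ must be of the form $g_k p_k$ for some $g_k \in G_i$, but the \emph{patching} $k \mapsto g_k$ need not come from a single element of $G_i$. One must verify that the distinguishing idempotents $f_g$, together with $G_i$-invariance of $E_i$, force any such $u$ into $G_i$ --- by analyzing when $u p_k u^{-1} = g_k p_k g_k^{-1}$ agrees with $p_k$ and using the witnesses to exclude trivial-but-non-identity patchings. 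If genuinely new patched units persist, one iterates the construction, alternately adjoining units in $\mathsf{U}(T_i) \setminus G_i$ to an enlarged $G_i$ and adjoining fresh distinguishing idempotents to $E_i$; termination is underwritten by local finiteness of $\mathsf{U}(S)$ applied to the finitely generated subgroup of patched units arising at each finite stage.
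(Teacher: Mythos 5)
Your forward direction is correct and essentially the paper's argument. The converse, however, has a genuine gap, and it is exactly the one you flag in your final paragraph: faithfulness of the $G_{i}$-action on $E_{i}$ does \emph{not} certify fundamentality of $T_{i}=(G_{i}E_{i})^{\vee}$, because Lemma~\ref{lem:spun} requires the action of the full group $\mathsf{U}(T_{i})$ to be faithful, and patched units really can act trivially. Concretely, inside $\mathscr{I}_{4}$ take $G=\{1,(12)(34)\}$ and let $E$ be the Boolean subalgebra with atoms $1_{\{1\}}, 1_{\{2\}}, 1_{\{3,4\}}$; this is $G$-invariant and $G$ acts faithfully on it. But $u = 1_{\{1\}} \vee 1_{\{2\}} \vee (12)(34)1_{\{3,4\}} = (34)$ lies in $(GE)^{\vee}$, is a non-idempotent unit, and fixes every atom of $E$, so it commutes with all idempotents of $(GE)^{\vee}$: that monoid is not fundamental even though the $G$-action is faithful. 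Your proposed repair --- enlarge $G_{i}$ by the new units, add fresh witnesses, re-invariantize, and iterate --- produces a strictly ascending chain of finite subgroups and finite Boolean algebras with no termination argument: local finiteness of $\mathsf{U}(S)$ guarantees each finitely generated subgroup is finite, but not that the chain stabilizes, so the "limit" of the iteration need not be a \emph{finite} subalgebra.

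The paper closes this by a different device that sidesteps $\mathsf{U}(T_{i})$ entirely. Starting from a finite subalgebra $T$, it uses basicness of $S$ to write each $g\in\mathsf{U}(T)$ as $g=a_{1}\vee\cdots\vee a_{m}\vee e$ with the $a_{j}$ infinitesimals, and adjoins the \emph{signature set} $\{\mathbf{d}(a_{1}),\ldots,\mathbf{d}(a_{m}),e\}$ (and its conjugates) to the Boolean algebra $B$ of idempotents before forming $T'=(\mathsf{U}(T)B)^{\vee}$. Then every generator $gf$ of $T'$ is $a_{1}f\vee\cdots\vee a_{m}f\vee ef$, a join of infinitesimals and an idempotent by Lemma~\ref{lem:order-ideal-infinitesimal}, so $T'$ is \emph{basic}; Proposition~\ref{prop:covid} then gives fundamentality of the finite monoid $T'$ directly, with no need to identify its group of units. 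In the example above, this amounts to forcing $1_{\{1\}},1_{\{3\}}$ (the domains of the infinitesimals in $(12)(34)$) into $B$, which splits the atom $1_{\{3,4\}}$ and kills the offending unit $u$. I recommend you replace your Lemma~\ref{lem:spun}-based step with this argument.
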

\begin{proof} We prove the easy direction first. 
Let $S$ be an AF inverse monoid.
Then $S = \bigcup_{i=1}^{\infty} S_{i}$ where each $S_{i}$ is a matricial subalgebra of $S$.
By Proposition~\ref{prop:covid}, each $S_{i}$ is basic
and so, since $S_{i}$ is a subalgebra of $S$, it follows that $S$ is basic.
The fact that the group of units of $S$ is locally finite follows by the same argument as that used in the proof of Theorem~\ref{them:locally-finite}.

We now prove the converse.
By assumption, our semigroup is basic and so it is piecewise-factorizable
by Proposition~\ref{prop:basic-properties}.
By Lemma~\ref{lem:basic-factorizable}, $S$ is factorizable.
Thus by Theorem~\ref{them:locally-finite}, we can write $S = \bigcup_{i=1}^{\infty} S_{i}$ where the $S_{i}$ are finite subalgebras of $S$. 
By Lemma~\ref{lem:needed-to-prove-result},
every finite subalgebra of $S$ is contained in a finite fundamental subalgebra of $S$.
Thus, we may find a finite fundamental subalgebra $T_{1}$ of $S$ such that $S_{1} \subseteq T_{1}$.
By Proposition~\ref{prop:structure-two}, $T_{1}$ is matricial.
Because $T_{1}$ is finite, we may find an $S_{i}$ such that $T_{1} \subseteq S_{i}$.
By a similar argument to that above, we may find a finite matricial subalgebra $T_{2}$ of $S$
such that $S_{i} \subseteq T_{2}$.
And so on.
We have therefore shown that $S = \bigcup_{i=1}^{\infty} T_{i}$, where each $T_{i}$ is a finite matricial subalgebra of $S$.
\end{proof}

The referee asked if we could replace basic inverse monoids by fundamental inverse monoids in our theorem above.
We have not been able to do so;
nevertheless, the next result explains why basic inverse monoids are natural.
See \cite[Proposition 4.31]{Lawson2016}.\footnote{The result stated in that paper misses out the extra needed condition that $S$ is a meet-monoid.}
This result uses non-commutative Stone duality which is described in \cite{Lawson2022};
the Stone groupoid of a Boolean inverse monoid $S$ is an \'etale topological groupoid
contructed from $S$ using ultrafilters
whose space of identities is homeomorphic to the Stone space of $\mathsf{E}(S)$.
A groupoid is principal if the local groups are trivial.

\begin{proposition}\label{prop:principal} 
Let $S$ be a Boolean inverse meet-monoid.
Then $S$ is basic if and only if the Stone groupoid of $S$ is principal.
\end{proposition}

\section{Characterization of UHF inverse monoids}

In this section, we shall abstractly characterize UHF monoids.
We use Theorem~\ref{them:locally-matricial} from the previous section.
The key to our result is the theory of MV-algebras, so it is there we shall start.

\subsection{MV-algebras}

We refer the reader to \cite{CDM} for background information on MV-algebras.
An {\em MV-algebra} $(A,+,\neg,0)$ is a commutative monoid $(A,+,0)$
together with a unary operation $\neg$ satisfying the following three axioms; in what follows,
we define $1 = \neg 0$:
\begin{enumerate} 
\item $\neg \neg x = x$.
\item $x + 1 = 1$.
\item $\neg (\neg x + y) + y = \neg (\neg y + x) + x$.
\end{enumerate}
If $x$ is an element of an MV-algebra and $n$ is a natural number, define
$$nx = \overbrace{x + \ldots + x}^{n \, \text{\small times}}.$$
By \cite[Lemma 1.1.2]{CDM}, 
we may define a partial order on an MV-algebra by $x \leq y$ if $y = z + x$ for some $z$.
It can be shown that in an MV-algebra, $x \leq y$ if and only if $\neg x + y = 1$ \cite{CDM}.
The unit interval $[0,1]$ becomes an MV-algebra with respect to the following definitions:
$$x + y = \mbox{min}(x+y,1) \, \text{and} \, \neg x = 1-x;$$
observe that above we have used `$+$' in two different ways.
The partial order is the usual partial order.
An {\em ideal} $I$ of an MV-algebra $A$ is a subset that contains $0$,
is an order-ideal and is closed under $+$.
As usual, we shall say that an MV-algebra is {\em simple} if it has no non-trivial ideals.
For $n \geq 2$, define $L_{n}$ to be the MV-algebra with cardinality $n$ that contains
the elements $\frac{r}{n-1}$ where $0 \leq r \leq n-1$ and which is an MV-subalgebra of $[0,1]$.
These are precisely the finite simple MV-algebras by \cite[Corollary 3.5.4]{CDM}. 
We shall use the following property of finite simple MV-algebras later.
The proof is immediate from the structure of such algebras.

\begin{lemma}\label{lem:trump}
Every element of $L_{n}$ is a natural number multiple of the smallest non-zero element of $L_{n}$.
\end{lemma}

More generally, we have the following by \cite[Theorem 3.5.1]{CDM}:

\begin{proposition}\label{prop:simple-MV}
The simple MV-algebras are precisely those which can be embedded in the MV-algebra $[0,1]$.
\end{proposition}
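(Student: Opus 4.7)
The plan is to prove the two inclusions separately. The easy direction is that any MV-subalgebra $A \subseteq [0,1]$ is simple. Indeed, for every non-zero $x \in [0,1]$ there is an integer $n \geq 1$ with $nx = 1$ (computed in $[0,1]$), and this identity transfers to $A$ since the operation $\oplus$ is the restriction of the one on $[0,1]$. Hence any ideal $I$ of $A$ containing a non-zero element $x$ contains $nx = 1$, and so equals $A$. Thus $A$ is simple.

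For the hard direction, let $A$ be an arbitrary simple MV-algebra. I would first establish that $A$ is linearly ordered: by simplicity, $\{0\}$ is the unique maximal ideal of $A$, and maximal ideals in MV-algebras are prime (a standard fact from \cite{CDM}, Chapter~1); an MV-algebra is linearly ordered iff $\{0\}$ is prime, so $A$ is a chain. Next, I would show $A$ is Archimedean in the MV-sense: for every non-zero $x \in A$ there exists $n$ with $nx = 1$. The argument is that the subset $I = \{y \in A : ny < 1 \text{ for all } n\}$ is closed under $\oplus$ and downward closed, hence is an ideal; if $x \in A$ is non-zero and not Archimedean, then $x \in I$ and $I \neq \{0\}$, contradicting simplicity. (One has to be a little careful to check that $I$ is indeed closed under $\oplus$ in a linearly ordered MV-algebra, but this is straightforward from $n(y \oplus z) \leq ny \oplus nz$.)

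With linearity and Archimedeanness established, the final step is to embed $A$ into $[0,1]$. The cleanest route is via Mundici's categorical equivalence $\Gamma$ between MV-algebras and lattice-ordered abelian groups with strong unit: under $\Gamma$, simple MV-algebras correspond to simple $\ell$-groups with strong unit, and by H\"older's theorem these are precisely the subgroups of $(\mathbb{R},+,\leq)$ equipped with a positive real number as strong unit. Normalizing so that the strong unit maps to $1$ and applying $\Gamma$ yields an embedding $A \hookrightarrow \Gamma(\mathbb{R},1) = [0,1]$.

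The main obstacle is the last step: invoking Mundici's equivalence and H\"older's theorem brings in nontrivial machinery from the theory of $\ell$-groups. A more elementary, self-contained alternative would be to mimic the classical proof of H\"older's theorem directly inside $A$: fix a non-zero $u \in A$ and define $\varphi(x) = \sup\{m/n : m \cdot u \text{ fits $n$ times into $x$ in a suitable sense}\}$, show this is well-defined using Archimedeanness, and verify that $\varphi$ is an MV-embedding into $[0,1]$. Either route, the technical heart of the argument lies in controlling the MV-arithmetic of the Archimedean property and translating it into a real-valued scale.
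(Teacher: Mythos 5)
Your outline is essentially correct, but note that the paper does not prove this statement at all: it is quoted directly from \cite[Theorem 3.5.1]{CDM}, and your sketch is in substance a reconstruction of the standard proof given there (quotient-free version: simplicity forces $\{0\}$ to be maximal, hence prime, hence $A$ is a chain; the non-Archimedean elements form an ideal, which must vanish; then Chang/H\"older via the $\Gamma$ equivalence embeds $A$ into $[0,1]$). Both directions of your argument are sound, and the reduction of the hard direction to H\"older's theorem for totally ordered Archimedean abelian groups is exactly how the cited proof works. One local correction: the hint you give for showing that $I=\{y: ny<1 \text{ for all } n\}$ is closed under $\oplus$ does not work, because $n(y\oplus z)\leq ny\oplus nz$ is in fact an equality and $ny\oplus nz$ can equal $1$ even when $ny,nz<1$ (take $y=z=\tfrac14$, $n=2$ in $[0,1]$). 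The correct estimate is $y\oplus z\leq (y\vee z)\oplus(y\vee z)$, whence $n(y\oplus z)\leq (2n)(y\vee z)<1$ because $y\vee z\in I$; with that repair the ideal argument goes through. You should also record that ``simple'' presupposes $0\neq 1$, which is automatic for subalgebras of $[0,1]$, and that transferring simplicity across $\Gamma$ uses the correspondence between MV-ideals of $\Gamma(G,u)$ and $\ell$-ideals of $G$ --- all standard, and all already packaged in the reference the paper cites.
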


By a {\em rational MV-algebra} 
we mean an MV-algebra isomorphic to a subalgebra of $\mathbb{Q} \cap [0,1]$.
The following result is key to our work.
It is a simple consequence of \cite[Theorem 5.1]{CDuM}. 

\begin{proposition}\label{prop:locally-finite-MV} 
The rational MV-algebras are precisely the simple
locally finite MV-algebras.
\end{proposition}

The following is derived from \cite[Proposition 3.5.3]{CDM}.

\begin{lemma}\label{lem:zelensky} The only finite MV-algebras that can be embedded in the MV-algebra $[0,1]$
are those of the form $L_{n}$ for some $n \geq 2$.
\end{lemma}

\subsection{Foulis monoids}

We shall now define a class of Boolean inverse monoids which give rise to (all) MV-algebras.

We say that a Boolean inverse monoid $S$ satisfies the {\em lattice condition} if  
$S/\mathscr{J}$ is a lattice under subset-inclusion.\footnote{The reader should note that this is an ordering on the principal ideals of the inverse semigroup
and is therefore not the same as the natural partial order; for example, 
in the symmetric inverse monoid $\mathcal{I}_{2}$ the idempotents $e = 1_{\{1\}}$ and $f = 1_{\{2\}}$ are such that $e \wedge f = 0$
but they generate the same principal ideal since $e\,\mathscr{D}\,f$.}
The proof of the following is immediate from the fact that every ideal of a finite symmetric inverse monoid is principal
and is generated by any element of maximum rank.

\begin{lemma}\label{lem:jade} The set of principal ideals of finite symmetric inverse monoids
is linearly ordered.
\end{lemma}

A {\em Foulis monoid} is defined to be a factorizable Boolean inverse monoid 
satisfying the lattice condition.\footnote{This is different from the way the term was used in \cite{LS}
where the `lattice condition' was omitted from the definition.}
Foulis monoids are directly finite by Lemma~\ref{lem:fac-is-df};
they are also purely Dedekind finite and so $\mathscr{D} = \mathscr{J}$ by Lemma~\ref{lem:dedef}.

The following construction was first carried out in \cite{LS}.
Let $S$ be an arbitrary Boolean inverse monoid.
Denote by $[e]$ denotes the set of all idempotents $e_{1}$ such that $e_{1} \, \mathscr{D} \, e$. 
Thus $\mathsf{E}(S)/\mathscr{D}$ is just the set $\{[e] \colon e \in \mathsf{E}(S) \}$.
Define a {\em partially defined addition} on this set by
$$[e] \oplus [f] = [e' \vee f']$$ 
if $e \, \mathscr{D} \, e'$ and $f \, \mathscr{D} \, f'$ and $e'$ and $f'$ are orthogonal
and undefined otherwise.
Put $\mathbf{0} = [0]$ and $\mathbf{1} = [1]$.
Define $\mathsf{Int}(S) = \mathsf{E}(S)/\mathscr{D}$ with the above partial addition.
This structure is called the {\em partial type monoid};
Wehrung \cite[Definition 4.1.3]{Wehrung} calls this the {\em type interval}.

Now, assume that $S$ is a Foulis monoid,
and use the construction of \cite{LS}.
In a Foulis monoid, the set $[e]$ consists of all idempotents $e'$ where $SeS = Se'S$.
Define a unary operation by $\neg [e] = [\bar{e}]$,  well-defined by Proposition~\ref{prop:factorizable}.
Define
$$[e] \wedge [f] = [i]$$ 
precisely when  
$SeS \cap SfS = SiS$.
Define a binary operation on $\mathsf{Int}(S)$ by
$$[e] + [f] = [e] \oplus ([\bar{e}] \wedge [f]).$$
Put $\mathsf{L}(S)$ equal to the set  $\mathsf{E}(S)/\mathscr{D}$ 
equipped with the two operations defined above together with $\mathbf{0}$ and $\mathbf{1}$.
We have the following.

\begin{theorem}\label{thm:alls-well} 
If $S$ is a Foulis monoid, then $\mathsf{L}(S)$, with the addition defined above, is an MV-algebra.
\end{theorem}

The following was proved as \cite[Proposition 2.12]{LS}.

\begin{lemma}\label{lem:order-order} Let $S$ be a Foulis monoid.
Then $[e] \leq [f]$ in the MV-algebra $\mathsf{L}(S)$ if and only if $e \preceq f$ in $S$.
Observe that $[e] < [f]$ in the MV-algebra $\mathsf{L}(S)$ if and only if $e \prec f$ in $S$.
\end{lemma}

The following lemma simply reassures us that, in the case of Foulis monoids,
we really are extending the partially defined addition. 

\begin{lemma}\label{lem:chelsea} Let $S$ be a Foulis monoid.
Suppose that $e \perp f$.
Then $[e] \oplus [f] = [e \vee f]$.
\end{lemma}
\begin{proof} By definition $[e] \oplus [f] = [e] + ([\bar{e}] \wedge [f])$.
Since $e \perp f$, it follows that $f \leq \bar{e}$.
Thus $[\bar{e}] \wedge [f] = [f]$.
\end{proof}

\begin{example}{\em The monoids $\mathcal{I}_{n}$ are Foulis monoids by 
Lemma~\ref{lem:metal} and Lemma~\ref{lem:jade}.
In this case, $\mathsf{L}(\mathcal{I}_{n}) \cong L_{n+1}$,
since $\mathcal{I}_{n}$ contains $(n+1)$ $\mathscr{D}$-classes of idempotents. 
Thus the finite symmetric inverse monoids are classified by their associated MV-algebras.
If $S$ is a finite matricial Boolean inverse monoid, then its associated MV-algebra
is a finite MV-algebra and every finite MV-algebra arises in this way.
The finite matricial Boolean inverse monoids are classified by their associated MV-algebras.
See \cite[Theorem 2.14]{LS}.}
\end{example}
 
A subset $F \subseteq \mathsf{E}(S)$, where $0 \in F$, is called a {\em conjugate-closed additive ideal}
if it is an additive ideal of $\mathsf{E}(S)$ closed under conjugation by elements of $S$.

\begin{lemma}\label{lem:pd}  Let $S$ be a Boolean inverse monoid.
\begin{enumerate}

\item There is an order-isomorphism between the poset of additive ideals of $S$ and
the poset of conjugate-closed additive ideals of $\mathsf{E}(S)$.

\item Let $S$ be a Foulis monoid.
Then there is an order-isomorphism between the poset of additive ideals of $S$
and the poset of MV-algebra ideals of $\mathsf{L}(S)$.

\end{enumerate}
\end{lemma}
\begin{proof} (1) The map $\mathsf{E}$ takes an additive ideal $I$ to the conjugate-closed additive ideal
$\mathsf{E}(I) \subseteq \mathsf{E}(S)$.
Observe that $\mathsf{E}$ is order-preserving.
Conversely, if $F$ is a conjugate-closed additive ideal of $\mathsf{E}(S)$,
then $(SFE)^{\vee}$ is an additive ideal of $S$.
Again, this construction is order-preserving.
These two constructions are mutually inverse:
if $I$ is an additive ideal of $S$ then
$I = (S\mathsf{E}(I)S)^{\vee}$,
whereas, if 
$F \subseteq \mathsf{E}(S)$ is a conjugate-closed additive ideal,
then 
$\mathsf{E}((SFS)^{\vee}) = F$.

(2) By part (1) above,  there is an order-isomorphism between
the additive ideals of $S$ and the conjugate-closed additive ideals of $\mathsf{E}(S)$.
We shall now show that there is an order-isomorphism between the 
conjugate-closed additive ideals of $\mathsf{E}(S)$ and the MV-algebra ideals of $\mathsf{L}(S)$.

Let $F$ be a conjugate-closed additive ideal of $\mathsf{E}(S)$.
Define $[F] = \{ [e] \colon e \in F \}$.
It remains to show that $[F]$ is an ideal in $\mathsf{L}(S)$.
Certainly, $[F]$ contains $[0]$.
It is also an order-ideal by Lemma~\ref{lem:order-order}.
It remains to show that $[F]$ is closed under $+$.
Let $[e], [f] \in [F]$.
So, $e,f \in F$.
Suppose that $S\bar{e}S \cap SfS = SiS$.
Then $i \preceq f$.
It follows that $i \in F$.
Observe that $i \preceq \bar{e}$.
It follows that $[e] + [f] = [e] \oplus [i] = [e \vee i']$
where $i' \perp i$ and $i \, \mathscr{D} \, i'$.
It follows that $i' \in F$ and so $e \vee i' \in F$.
We have therefore proved that $[F]$ is closed under $+$.
If $F_{1} \subseteq F_{2}$ then $[F_{1}] \subseteq [F_{2}]$
with equality if and only if $F_{1} = F_{2}$.
Thus we have an order-preserving map given by $F \mapsto [F]$.
Now let $\mathcal{I} \subseteq \mathsf{L}(S)$ be an ideal.
Define 
$$\mathcal{E}(\mathcal{I}) = \{e \in \mathsf{E}(S) \colon [e] \in \mathcal{I}\}.$$
To prove that $\mathcal{E}(\mathcal{I})$ is a conjugate-closed additive ideal of $\mathsf{E}(S)$,
there are three conditions to check, each of which is easy:
it is an order-ideal;
it is closed under orthogonal binary joins and so it is closed under arbitrary binary joins;
it is closed under the $\mathscr{D}$-relation.
Suppose that $\mathcal{I}_{1} \subseteq \mathcal{I}_{2}$.
Then $\mathcal{E}(\mathcal{I}_{1}) \subseteq \mathcal{E}(\mathcal{I}_{2})$
with equality if and only if $\mathcal{I}_{1} = \mathcal{I}_{2}$.
Thus we have an order-preserving map given by $\mathcal{I} \mapsto \mathcal{E}(\mathcal{I})$.
It is now routine to check that the maps 
$F \mapsto [F]$ and $\mathcal{I} \mapsto \mathcal{E}(\mathcal{I})$
 are mutually inverse.
 
 If we combine what we have found above with the result of part (1),
 then we have proved our order-isomorphism.
\end{proof}

The following is immediate by the above and the only case we need.

\begin{proposition}\label{prop:green} Let $S$ be a Foulis monoid.
Then $S$ is $0$-simplifying if and only if $\mathsf{L}(S)$ is simple.
\end{proposition}

We shall now prove a few results about Foulis monoids in which 
$S/\mathscr{J}$ is linearly ordered.

\begin{lemma}\label{lem:muffin} Let $S$ be a Foulis monoid in which $S/\mathscr{J}$ is linearly ordered. 
If $e$ and $f$ are any idempotents in $S$ then 
$[e] + [f] = [1]$ if and only if  $\bar{e} \preceq f$.  
\end{lemma} 
\begin{proof}
Suppose, first, that $\bar{e} \preceq f$.  
By Lemma~\ref{lem:order-order},
$[\bar{e}] \leq [f]$ and so $[\bar{e}] \wedge [f] = [\bar{e}]$.
It follows, from the definition, that  $[e] + [f] = [e] \oplus [\bar{e}] = [1]$.
Conversely, let $[e] + [f] = [1]$.
There are two possibilities:
either 
$SfS \subset S\bar{e}S$
or
$S\bar{e}S \subseteq SfS$.  
We shall rule out the former.
Suppose that $SfS \subset S\bar{e}S$.
Then $f \preceq \bar{e}$.
And so 
$f \,\mathscr{D}\, f' \leq \bar{e}$.
Observe that $f' \perp \bar{e}$.
By definition, $[e] + [f] = [e] \oplus [f'] = [e \vee f']$.
By assumption, $e \vee f' \, \mathscr{D} \, 1$.
By Lemma~\ref{lem:fac-is-df},
it follows that
$e \vee f' = 1$. 
We also have that $f' e = 0$,
and so $f' = \bar{e}$.
Whence $SfS = Sf'S = S\bar{e}S$.
We have proved that $S\bar{e}S = SfS$, which is a contradiction. 
\end{proof}

The following is an immediate consequence of the lemma above.

\begin{corollary}\label{cor:sponge}
Let $S$ be a Foulis monoid in which $S/\mathscr{J}$ is linearly ordered.
Then $[e] + [f] < [1]$ if and only if $f \prec \bar{e}$.
\end{corollary}

We can now prove what will be an important result.

\begin{lemma}\label{lem:cheese-cake}
Let $S$ be a Foulis monoid in which $S/\mathscr{J}$ is linearly ordered.
\begin{enumerate}
\item $2[e] = [f] < [1]$ if and only if there exist orthogonal idempotents $f_{1}$ and $f_{2}$ where $f \, \mathscr{D} \, f_{1} \vee f_{2}$,
$e \, \mathscr{D} \, f_{1}$, $e \,\mathscr{D} \, f_{2}$, and $f \prec 1$.

\item  $n[e] = [f] < [1]$ if and only if there exist orthogonal idempotents $e_{1}, \ldots, e_{n}$
where
$f \, \mathscr{D} \, e_{1} \vee \ldots \vee  e_{n}$,
$e \, \mathscr{D} \, e_{i}$ for each $i$,
and $f \prec 1$.
\end{enumerate}
\end{lemma} 
\begin{proof} (1) 
Suppose that 
there exist orthogonal idempotents $f_{1}$ and $f_{2}$ where $f \, \mathscr{D} \, f_{1} \vee f_{2}$,
$f_{1} \perp f_{2}$, $e \, \mathscr{D} \, f_{1}$, $e \,\mathscr{D} \, f_{2}$, and $f \prec 1$.
Then 
$$2[e] = [e] + [e] = [f_{1}] + [f_{2}] = [f_{1} \vee f_{2}] = [f] < [1].$$
Conversely, let $2[e] = [f] < [1]$.
Thus $[e] + [e] = [f] < [1]$.
By Corollary~\ref{cor:sponge}, $e \prec \bar{e}$.
There therefore exists an idempotent $f_{2}$ such that
$e \, \mathscr{D} \, f_{2} \leq \bar{e}$.
Put $f_{1} = e$.
Then $f_{1} \perp f_{2}$
and $f \, \mathscr{D} \, f_{1} \vee f_{2}$, as required.

(2) Assume first that there exist orthogonal idempotents $e_{1}, \ldots, e_{n}$ such that
$f \prec 1$, $f \, \mathscr{D} \, e_{1} \vee \ldots \vee e_{n}$ and $e \, \mathscr{D} \, e_{i}$ for each $i$.
Then $[f] = [e_{1} \vee \ldots \vee e_{n}] = [e_{1}] + \ldots + [e_{n}] = n[e]$
where we have used Lemma~\ref{lem:chelsea}.
Thus $n[e] = [f] < [1]$.

To prove the converse, we shall use induction.
The case where $n = 2$ was handled in part (1).
Suppose that  $n[e] = [f] < [1]$.
Then $n[e] = [e] + (n-1)[e]$.
Let $(n-1)[e] = [g]$.
By induction, there exist idempotents $g_{2}, \ldots, g_{n}$ such that $g \, \mathscr{D} \, g_{2} \vee \ldots \vee g_{n}$,
$e \, \mathscr{D} \, g_{i}$ where $2 \leq i \leq n$,
and $g_{k} \perp g_{l}$ where $k \neq l$ and $2 \leq k, l \leq n$.
This leaves us with $[e] + [g] = [f] < [1]$.
By Lemma~\ref{lem:muffin}, $g \prec \bar{e}$.
Thus $[e] + [g] = [e \vee g]$ 
since $g \perp e$.
It follows that $f \, \mathscr{D} \, e \vee g$.
But  $g \, \mathscr{D} \, g_{2} \vee \ldots \vee g_{n}$.
Let $g \stackrel{y}{\leftarrow}  g_{2} \vee \ldots \vee g_{n}$.
We now use Lemma~\ref{lem:jefferson}. 
Put $e_{i} = \mathbf{r}(yg_{i})$.
We therefore have that $g = e_{2} \vee \ldots \vee e_{n}$.
Put $e_{1} = e$.
Then the $e_{j}$ are orthogonal to each other,
$f \, \mathscr{D} \,  e_{1} \vee e_{2} \vee \ldots \vee e_{n}$,
$e \, \mathscr{D} \, e_{j}$,
and $f \prec 1$.
\end{proof}

\subsection{Multi-infinitesimals}

Recall that with respect to the restricted product, an inverse semigroup is a groupoid.
We shall be interested in simplicial groupoids \cite{Higgins} therein, although we shall handle them in a particular way.

We begin with a definition which is nothing other than a version of the multisections of \cite{Nek}.
Let $n \geq 2$.
By a {\em multi-infinitesimal} or an {\em $n$-infinitesimal} $\mathbf{a} = (a_{n}, \ldots, a_{1})$ in a Boolean inverse monoid $S$
is meant a sequence of $n$ elements of $S$ such that 
$$\xymatrix{
e_{n+1} & \ar[l]_{a_{n}} e_{n} & \ar[l]_{a_{n-1}} e_{n-1} & \ar@{--}[l]   e_{3} &  \ar[l]_{a_{2}} e_{2} & \ar[l]_{a_{1}} e_{1}
}
$$
(from the picture $\mathbf{d}(a_{i+1}) = \mathbf{r}(a_{i})$)
where the $n+1$ idempotents $e_{1}, \ldots, e_{n+1}$ are orthogonal.
The class of $2$-infinitesimals were used in \cite{Lawson2016}.

\begin{remark}\label{rem:barry}{\em 
Define a {\em circular multi-infinitesimal} to be a sequence of orthogonal idempotents $e_{1}, \ldots, e_{n+1}$
linked by elements $e_{i} \stackrel{a_{i}}{\rightarrow} e_{i+1}$, where $1 \leq i \leq n$,
and $e_{n+1} \stackrel{a_{n+1}}{\rightarrow} e_{1}$
such that $a_{1} \ldots a_{n+1} = e_{1}$.
The elements $a_{1}, \ldots, a_{n+1}$ are called the {\em components} of the circular multi-infinitesimal.
If we erase any one component of a circular multi-infinitesimal, we get a multi-infinitesimal.
On the other hand, given an $n$-infinitesimal, 
adjoining the element 
$(a_{n} \ldots a_{1})^{-1}$ 
converts it into a circular multi-infinitesimal.
Thus multi-infinitesimals and circular multi-infinitesimals are different ways
of viewing the same object.

Given a circular multi-infinitesimal,
connect any two vertices by an edge labelled by an appropriate product of the components.
In this way, we obtain a simplical groupoid \cite{Higgins}
$\mathsf{grpd}(\mathbf{a})$
with
the additional property that the identities form an orthogonal set.
It is clear that any such simplicial groupoid gives rise to a circular multi-infinitesimal.
}
\end{remark}

\begin{example}{\em Let $\mathbf{a}$ be the following $2$-infinitesimal:

$$\xymatrix{
e_{3} &  \ar[l]_{a_{2}} e_{2} & \ar[l]_{a_{1}} e_{1}
}
$$
\noindent
If we adjoin $a_{3} = (a_{2}a_{1})^{-1}$, then we get the following circular multi-infinitesimal

$$\xymatrix{& &e_{1} \ar[dr]^{a_{1}} & \\
&e_{3} \ar[ur]^{a_{3}} & &e_{2} \ar[ll]_{a_{2}}
}
$$
\noindent
If we adjoin the inverses of the components, we get the simplicial groupoid $\mathsf{grpd}(\mathbf{a})$:

$$\xymatrix{& &e_{1} \ar[dr] \ar@/^/[dl] & \\
&e_{3} \ar@/_/[rr] \ar[ur] & &e_{2} \ar[ll] \ar@/^/[ul]
}
$$
}
\end{example}

We now give names to the ingredients of a multi-infinitesimal.
If $\mathbf{a}$ is a multi-infinitesimal then the entries $a_{i}$ are called its {\em components}.
The components $a_{i+1}$ and $a_{i}$ are said to be {\em adjacent}.
The idempotents $e_{1},\ldots, e_{n+1}$ are said to be {\em in} the multi-infinitesimal $\mathbf{a}$.
Observe that all the idempotents in a multi-infinitesimal are $\mathscr{D}$-related to each other.
The {\em source} of $\mathbf{a}$ is the idempotent $e_{1}$,
denoted by $\alpha (\mathbf{a})$, 
and the {\em target} of $\mathbf{a}$ is the idempotent $e_{n+1}$,
denoted by $\omega (\mathbf{a})$.
The join of all of the idempotents in $\mathbf{a}$ is called the {\em extent} of the multi-infinitesimal, denoted by $\mathbf{e}(\mathbf{a})$.
Observe that each component of a multi-infinitesimal really is an infinitesimal.

\begin{remark}{\em If $\mathbf{a}$ is a multi-infinitesimal
then we can {\em never} have $\alpha (\mathbf{a}) = 1$.}
\end{remark}

Let $\mathbf{a} = (a_{n}, \ldots, a_{1})$ be an $n$-infinitesimal
in a Boolean inverse monoid $S$ with extent $e$.
Put $G = \mathsf{grpd}(\mathbf{a})$.
This is a connected principal groupoid with $n+1$ identities.
Adjoin the zero of $S$ to $G$ to get the inverse subsemigroup $G^{0}$.
Now put $T = \mathsf{inv}(\mathbf{a}) = (G^{0})^{\vee}$.
Then $T$ is a Boolean inverse subalgebra of $eSe$.

Multi-infinitesimals are intimately connected with subalgebras of $S$ which are isomorphic to finite symmetric inverse monoids as we now show.

\begin{lemma}\label{lem:jordan} 
Let $S$ be a Boolean inverse monoid and let $e$ be an idempotent in $S$
and let $n \geq 1$.
The following are equivalent:
\begin{enumerate}
\item There is an additive embedding of $\mathcal{I}_{n+1}$ into $eSe$.
\item There is an $n$-infinitesimal $\mathbf{a}$ such that $e = \mathbf{e}(\mathbf{a})$.
\end{enumerate}
\end{lemma}
\begin{proof} (1)$\Rightarrow$(2).
Let $S$ be a Boolean inverse monoid containing the idempotent $e$.
Suppose that $T$ is a subalgebra of $eSe$, where $T$ is isomorphic to the finite symmetric inverse monoid $\mathcal{I}_{n+1}$.
Then $T$ contains copies (under the isomorphism) of the $n$ partial bijections
$i \mapsto i+1$ where $i = 1, \ldots, n$,
each of which is an atom of $T$, and all together form a multi-infinitesimal whose extent is $e$.

(2)$\Rightarrow$(1).
Let $\mathbf{a}$ be an $n$-infinitesimal.
Then $T = \mathsf{inv}(\mathbf{a})$
is a Boolean inverse subalgebra of $eSe$ isomorphic to $\mathcal{I}_{n+1}$. 
\end{proof}

We define five operations on multi-infinitesimals:
truncation, splicing, restriction, translation, and product.
We shall use these operations in proving our main theorem.

\begin{center}
{\bf Truncation}
\end{center}

This is a simple operation.
If $\mathbf{a} = (a_{n}, \ldots, a_{1})$ is an $n$-infinitesimal
then we may obtain an $(n-1)$-infinitesimal $\mathbf{a}'$, called the {\em elementary truncation} of $\mathbf{a}$,
by defining $\mathbf{a}' = (a_{n}, \ldots, a_{2})$.
The extent of an elementary truncation is strictly less than the original extent. 
But we have that $\omega (\mathbf{a}') = \omega (\mathbf{a})$.
We may apply any number of elementary truncations to a multi-infinitesimal
to obtain a multi-infinitesimal until we reach an individual infinitesimal when we stop.
In such a sequence, the extents are strictly decreasing.
We shall also call any multi-infinitesimal $\mathbf{b}$ a {\em truncation} of $\mathbf{a}$ if it arises by means of a sequence of elementary trunctions applied to the multi-infinitesimal $\mathbf{a}$.

\begin{center}
{\bf Splicing}
\end{center}

The next operation is also very simple, and the proofs are easy.

\begin{lemma}[Splicing]\label{lem:splice}
Let $\mathbf{a}$ be an $m$-infinitesimal such that $\alpha (\mathbf{a}) = e$,
and 
let $\mathbf{b}$ be an  $n$-infinitesimal such that $\omega (\mathbf{b}) = f$
and let $x$ be any element $e \stackrel{x}{\leftarrow} f$.
and, suppose also, that $\mathbf{e}(\mathbf{a}) \perp \mathbf{e}(\mathbf{b})$.
Then there is a multi-infinitesimal
$\mathbf{a}x\mathbf{b} = (a_{m}, \ldots, a_{1}, x, b_{n}, \ldots, b_{1})$
such that 
\begin{itemize}
\item $\mathbf{e}(\mathbf{a}x\mathbf{b}) = \mathbf{e}(\mathbf{a}) \vee \mathbf{e}(\mathbf{b})$.
\item $\alpha (\mathbf{a}x\mathbf{b}) = \alpha (\mathbf{b})$.
\item $\omega (\mathbf{a}x\mathbf{b}) = \omega (\mathbf{a})$.
\item $\mathsf{inv}(\mathbf{a}) \cup \mathsf{inv}(\mathbf{b}) \subseteq \mathsf{inv}(\mathbf{a}x\mathbf{b})$.
\end{itemize}
\end{lemma}

\begin{center}
{\bf Restriction}
\end{center}

The next operation is slightly more involved.

\begin{lemma}[Restriction]\label{lem:restriction} Let $S$ be a Boolean inverse monoid.
Let $\mathbf{a}$ be an $n$-infinitesimal and let $f \leq \alpha (\mathbf{a})$.
Then we may construct the $n$-infinitesimal $(\mathbf{a}|f)$, called the 
{\em restriction of $\mathbf{a}$ to $f$},
which has components $a_{i+1}\mathbf{r}(a_{i} \ldots a_{1}f)$
where $1 \leq i \leq n$ such that
\begin{itemize}
\item $\alpha (\mathbf{a}|f) = f$. 
\item $\mathbf{e}(\mathbf{a}|f) \leq \mathbf{e}(\mathbf{a})$.
\end{itemize}
\end{lemma}
\begin{proof} By construction, the domains and ranges of the new elements match up.
Each component of  $(\mathbf{a}|f)$ has the form $a_{i}k \leq a_{i}$ where $k$ is an idempotent.
It follows that $\mathbf{r}(a_{i}k) \leq \mathbf{r}(a_{i})$
and $\mathbf{d}(a_{i}k) \leq \mathbf{d}(a_{i})$.
Thus the idempotents that arise are certainly orthogonal to each other.
\end{proof}

We shall need the following lemma later.

\begin{lemma}\label{lem:cary}
Let $S$ be a Boolean inverse monoid.
Let $\mathbf{a}$ be an $n$-infinitesimal and let $e, f \leq \alpha (\mathbf{a})$
where $e \perp f$.
Then any idempotent in $(\mathbf{a}|e)$ is orthogonal to any idempotent in $(\mathbf{a}|f)$.
\end{lemma}
\begin{proof} That both $(\mathbf{a}|e)$ and $(\mathbf{a}|f)$ are multi-infinitesimals
is immediate by Lemma~\ref{lem:restriction}.
That every idempotent in $(\mathbf{a}|e)$ 
is orthogonal to every idempotent in $(\mathbf{a}|f)$ 
follows from the fact that $e \perp f$ and $\mathbf{a}$ is a multi-infinitesimal.
\end{proof}

\begin{center}
{\bf Translation}
\end{center}

To define our next operation, we need a lemma first.

\begin{lemma}\label{lem:jefferson} Let $S$ be a Boolean inverse monoid.
Suppose that $x \in S$ is such that $\mathbf{d}(x) = \bigvee_{i=1}^{n}e_{i}$.
\begin{enumerate}

\item Then $x$ is a join of the elements $xe_{i}$ and $\mathbf{r}(x) = \bigvee_{i=1}^{n} \mathbf{r}(xe_{i})$.

\item If the idempotents $e_{i}$ are orthogonal, so too are the idempotents  $\mathbf{r}(xe_{i})$.

\item If the idempotents 
$$e_{1}, \ldots, e_{n}$$ 
are $\mathscr{D}$-related, so too are the idempotents 
$$\mathbf{r}(xe_{1}), \ldots, \mathbf{r}(xe_{n}).$$

\end{enumerate}
\end{lemma}
\begin{proof} (1) The elements  $xe_{i}$ are pairwise compatible and so have a join
$y \leq x$. But $\mathbf{d}(y) = \mathbf{d}(x)$ and so $x = \bigvee_{i=1}^{n} xe_{i}$.
However, $\mathbf{r}(x) = \bigvee_{i=1}^{n} \mathbf{r}(xe_{i})$.

(2) This follows from the fact that the idempotents $e_{i}$  are pairwise orthogonal.

(3) If $e_{i}$ is $\mathscr{D}$-related to $e_{i+1}$ then $\mathbf{r}(xe_{i})$ is  $\mathscr{D}$-related to $\mathbf{r}(xe_{i+1})$.
To see why, suppose that $e_{i} \stackrel{a}{\rightarrow} e_{i+1}$.
Then we have the following diagram:
$$\xymatrix{ 
e_{i }\ar[d]_{xe_{i}}   \ar[r]^{a} & e_{i+1} \ar[d]^{xe_{i+1}} &  \\
\mathbf{r}(xe_{i} ) \ar[r] & \mathbf{r}(xe_{i+1})
}
$$
This immediately implies that 
$\mathbf{r}(xe_{i})\, \mathscr{D} \,\mathbf{r}(xe_{i+1})$.
\end{proof}

We may now describe our next operation.

\begin{lemma}[Translation]\label{lem:adams} Let $S$ be a Boolean inverse semigroup.
Let $x \in S$ be an element and let $\mathbf{a}$ be an $n$-infinitesimal, 
with components $a_{1}, \ldots, a_{n}$
and containing the idempotents $e_{1}, \ldots, e_{n+1}$,
such that $\mathbf{d}(x) = \mathbf{e}(\mathbf{a})$.
Then we can define a new $n$-infinitesimal, denoted by $x \cdot \mathbf{a}$, 
whose components are $(xe_{i+1})a_{i}(xe_{i})^{-1}$ such that
\begin{itemize}
\item $\mathbf{e}(x \cdot \mathbf{a}) = \mathbf{r}(x)$.
\item $\alpha (x \cdot \mathbf{a}) \, \mathscr{D} \, \alpha (\mathbf{a})$
\item $\omega (x \cdot \mathbf{a}) \, \mathscr{D} \, \omega (\mathbf{a})$.
\end{itemize}
\end{lemma}
\begin{proof} We use Lemma~\ref{lem:jefferson}.
By assumption, $\mathbf{d}(x)$ is a join of the orthogonal idempotents $e_{1}, \ldots , e_{n+1}$.
It follows that $x$ is a join of the orthogonal set $xe_{1}, \ldots, xe_{n+1}$.
Thus $\mathbf{r}(x)$ is a join of the orthogonal idempotents $\mathbf{r}(xe_{1}), \ldots, \mathbf{r}(xe_{n+1})$.
The result now follows.
\end{proof}

\begin{center}
{\bf Product}
\end{center}

Let $S$ be a Boolean inverse monoid
which we regard as a groupoid with respect to the restricted product.
By an {\em $n \times m$ grid} over $S$, 
we mean a rectangular $n \times m$ array of squares,
the side of each square is labelled by an element of $S$,
and the squares commute (in the groupoid),
each vertex is labelled by an idempotent of $S$, 
and the idempotents are pairwise orthogonal.
For clarity, there are $(m+1) \times (n+1)$ vertices.
The word `row' will refer to a horizontal sequence of squares.
 If $\mathbf{g}$ is a grid then its extent $\mathbf{e}(\mathbf{g})$
is the join of all the orthogonal idempotents at the vertices.
Every grid determines a simplicial groupoid and so
we can define $\mathsf{inv}(\mathbf{g})$, a copy of a symmetric inverse monoid.
If the grid has $s$ vertices then $\mathsf{inv}(\mathbf{g})$ is isomorphic to $\mathcal{I}_{s}$.
Given any grid $\mathbf{g}$ we denote by $\alpha (\mathbf{g})$
the idempotent in the top right hand corner,
whereas $\omega (\mathbf{g})$ is the idempotent in the bottom left hand corner.

\begin{example}\label{ex:anchorite}
{\em  The following is an example of the form taken by a $2 \times 2$ grid:

$$\xymatrix{
\bullet \ar[d] &   \ar[l] \ar[d] \bullet & \ar[l] \ar[d] \bullet\\
\bullet \ar[d] &  \ar[l] \ar[d] \bullet & \ar[l] \ar[d] \bullet\\
\bullet &  \ar[l] \bullet & \ar[l] \bullet\\
}
$$
\noindent
Each arrow is labelled by an element of $(S,\cdot)$ (regarded as a groupoid),
domains and ranges in the groupoid match up,
and squares commute with the convention that in travelling against the arrow
we take the inverse of the element.}
\end{example}

\begin{lemma}[Product]\label{lem:franklin} Let $S$ be a Boolean inverse monoid.
Let $\mathbf{a}$ be an $n$-infinitesimal and let $\mathbf{b}$ be an $m$-infinitesimal
such that $\alpha (\mathbf{a}) = \mathbf{e}(\mathbf{b})$,
and let the idempotents contained in $\mathbf{b}$
be $e_{1}, \ldots, e_{m+1}$.
Define a grid, denoted by $\mathbf{a} \times \mathbf{b}$, as follows:
\begin{itemize}
\item  The top of the grid, which we shall call the {\em spine}, is $\mathbf{b}$.
\item  Form the restrictions
$$(\mathbf{a}|e_{m+1}), (\mathbf{a}|e_{n}), \ldots, (\mathbf{a} | e_{1})$$
by Lemma~\ref{lem:restriction}.
\item Using $\mathbf{b}$ as the backbone, attach each restriction 
$(\mathbf{a}|e_{i})$ to the idempotent $e_{i}$ in the backbone
to obtain a `comb'.
\item Fill-in any horizontal lines, starting at the top, using commutativity in the groupoid.
\end{itemize}
The result is a grid, denoted by $\mathbf{a} \times \mathbf{b}$,
such that 
\begin{itemize}
\item $\mathbf{e} (\mathbf{a} \times \mathbf{b}) = \mathbf{e}(\mathbf{a})$.
\item $\mathsf{inv}(\mathbf{a}), \mathsf{inv}(\mathbf{b}) \subseteq \mathsf{inv}(\mathbf{a} \times \mathbf{b})$.
\item Any idempotent in $\mathbf{a} \times \mathbf{b}$ is $\mathscr{D}$-related to any idempotent in $\mathbf{b}$.
\end{itemize}
\end{lemma}
\begin{proof}
That $\mathbf{a} \times \mathbf{b}$ really is a grid
follows by Lemma~\ref{lem:restriction} and Lemma~\ref{lem:cary}.

We now calculate the joins of the elements (pointing downwards) in each row.
In the first row, we have that 
$$\bigvee_{i=1}^{m+1} a_{1}e_{i} = a_{1}.$$
In row $j \geq 2$,
we have to calculate
$$\bigvee_{i=1}^{m+1} a_{j}\mathbf{r}(a_{j-1} \ldots a_{2}a_{1}e_{i}),$$
but
$$\bigvee_{i=1}^{m+1} a_{j}\mathbf{r}(a_{j-1} \ldots a_{2}a_{1}e_{i}) = a_{j}\left(\bigvee_{i=1}^{m+1} \mathbf{r}(a_{j-1} \ldots a_{2}a_{1}e_{i})\right) = a_{j}$$
since
$$\bigvee_{i=1}^{m+1} \mathbf{r}(a_{j-1} \ldots a_{2}a_{1}e_{i}) = a_{j-1} \ldots a_{1} \left( \bigvee_{i=1}^{m+1} e_{i} \right) a_{1}^{-1} \ldots a_{j-1}^{-1} 
= \mathbf{r}(a_{j-1}) 
= \mathbf{d}(a_{j}).$$
It now follows that  
$\mathbf{e} (\mathbf{a} \times \mathbf{b}) = \mathbf{e}(\mathbf{a})$
and
$\mathsf{inv}(\mathbf{a}), \mathsf{inv}(\mathbf{b}) \subseteq \mathsf{inv}(\mathbf{a} \times \mathbf{b})$
\end{proof}

\subsection{Finite lists of multi-infinitesimals}

In this section, we shall show how to handle matricial subalgebras.

Let $T = T_{1} \times T_{2}$,
where both $T_{1}$ and $T_{2}$ are Boolean inverse monoids,
with identities $e_{1}$ and $e_{2}$, respectively.
This is an `external' construction.
We now render it `internal'.
Put $T_{1}' = T_{1} \times \{0\}$ and $T_{2}' = \{0\} \times T_{2}$.
then $T_{1} \cong T_{1}'$ and $T_{2} \cong T_{2}'$
and $T = T_{1}' \vee T_{2}'$;
in other words, each element of $T$ is equal to a join of an element of $T_{1}'$ and an element of $T_{2}'$. 
The identity of $T$ is $(e_{1},e_{2}) = (e_{1},0) \vee (0,e_{2})$.
Observe that $(e_{1},0)T(e_{1},0) = T_{1}' \cong T_{1}$
and $(0,e_{2})T(0,e_{2}) = T_{2}' \cong T_{2}$.

More generally, 
let $S$ be a Boolean inverse monoid where $T_{1},\ldots,T_{m} \subseteq S$ are non-empty subsets
then $T_{1} \vee \ldots \vee T_{m}$ consists of all elements of $S$ of the form $t_{1} \vee \ldots \vee t_{m}$
where $t_{i} \in T_{i}$ and the join is defined.

\begin{lemma}\label{lem:tom} Let $S$ be a Boolean inverse monoid containing idempotents $e_{1}$ and $e_{2}$,
neither of which is zero and where $e_{1} \perp e_{2}$.
Suppose that 
$T_{1}$ is a subalgebra of $e_{1}Se_{1}$,
and
$T_{2}$ is a subalgebra of $e_{2}Se_{2}$.
Then
\begin{enumerate}
\item $T_{1} \vee T_{2}$ is a subalgebra of $(e_{1} \vee e_{2})S(e_{1} \vee e_{2})$.
\item $T_{1} \vee T_{2} \cong T_{1} \times T_{2}$.
\end{enumerate}
\end{lemma}
\begin{proof} (1)
Let $a \in T_{1}$ and $b \in T_{2}$.
Then $\mathbf{a} \leq e$ and $\mathbf{d}(b) \leq f$ and $e \perp f$.
It follows that $\mathbf{d}(a)\mathbf{d}(b) = 0$.
We may similarly show that $\mathbf{r}(a)\mathbf{r}(b) = 0$.
We deduce that $a \perp b$.
It follows that all joins in $T_{1} \vee T_{2}$ exist
and that $T_{1}, T_{2} \subseteq T_{1} \vee T_{2}$.
It is clear that the identity of $T_{1} \vee T_{2}$ is $e \vee f$.
The set $T_{1} \vee T_{2}$ is closed under binary compatible joins
and the semilattice of idempotents is $e^{\downarrow} \cup f^{\downarrow} = (e \vee f)^{\downarrow}$,
which is a Boolean algebra.
The idempotents in $T_{1} \vee T_{2}$ have the form $e_{1} \vee f_{1}$ where $e_{1}$ is an idempotent in $T_{1}$
and $f_{1}$ is an idempotent in $T_{2}$.
The complement in $(e \vee f)S(e \vee f)$ of the idempotent 
$e_{1} \vee f_{1}$ is
$e\overline{e_{1}} \vee f \overline{f_{1}}$. 

(2) Let $s \in T_{1} \vee T_{2}$.
Then $s = t_{1} \vee t_{2}$ where $t_{1} \in T_{1}$ and $t_{2} \in T_{2}$.
Suppose that $s = t_{1}' \vee t_{2}'$ where $t_{1}' \in T_{1}$ and $t_{2}' \in T_{2}$.
Then $se = t_{1}e = t_{1}$ and $se = t_{1}'e = t_{1}'$.
It follows that if we write $s = t_{1} \vee t_{2}$ where $t_{1} \in T_{1}$ and $t_{2} \in T_{2}$
then this is unique. We therefore define a map from $T_{1} \vee T_{2}$ to $T_{1} \times T_{2}$
by $s \mapsto (t_{1},t_{2})$. This is an isomorphism of inverse monoids.
\end{proof}

The notation we have introduced enables us now to handle matricial subalgebras efficiently;
we need only replace two factors by $m$ factors.

\begin{lemma}\label{lem:alex} 
Let $S$ be a Boolean inverse monoid.
\begin{enumerate}

\item Let $T$ be a subalgebra of $S$ where
$T \cong I_{1} \times \ldots \times I_{m}$
and each $I_{i}$ is a finite symmetric inverse monoid.
Then there are $m$ multi-infinitesimals $\mathbf{a}_{1}, \ldots, \mathbf{a}_{m}$ in $S$
such that $\mathsf{inv}(\mathbf{a}_{i}) \cong I_{i}$,
and $\mathbf{e}(\mathbf{a}_{i}) = f_{i}$,
where the idempotents $f_{1}, \ldots, f_{m}$ are orthogonal and their join is $1$.

\item Let $\mathbf{a}_{1}, \ldots, \mathbf{a}_{m}$ be $m$ multi-infinitesimals
such that $\mathbf{e}(\mathbf{a}_{i}) = f_{i}$
where $f_{1}, \ldots, f_{m}$ are $m$ orthogonal idempotents, whose join is $1$.
Then 
$$T = \mathsf{inv}(\mathbf{a}_{1}) \vee \ldots  \vee \mathsf{inv}(\mathbf{a}_{m})$$
is a well-defined subalgebra of $S$ isomorphic to a direct product of $m$ finite symmetric inverse monoids.

\end{enumerate}
\end{lemma} 
\begin{proof} (1) Let $I = I_{1} \times \ldots \times I_{m}$
be a direct product of $m$ finite symmetric inverse monoids.
Let the identity of $I_{i}$ be $e_{i}$.
Define $e_{i}'$ to be the element of $I$ that has $e_{i}$ in the $i$th position
and $0$s elsewhere.
Then $e_{1}', \ldots, e_{m}'$ are idempotents of $I$ and mutually orthogonal.
We suppose that $T \cong I$ for some isomorphism.
Denote by $f_{i}$ the idempotent of $T$ that is mapped to $e_{i}'$ by the isomorphism.
Because $T$ is a subalgebra of $S$ the join of the $f_{i}$ is $1$.
The atoms of $I_{i}$, regarded as elements of $I$, give rise to a multi-infinitesimal $\mathbf{b}_{i}$ 
in $I$. Under the isomorphism, this gives rise to a multi-infinitesimal $\mathbf{a}_{i}$,
the extent of which is $f_{i}$. 

(2) Each of $T_{i} = \mathsf{inv}(\mathbf{a}_{i})$ is a finite symmetric inverse monoid 
by Lemma~\ref{lem:jordan}, and subalgebra of $f_{i}Sf_{i}$.
It follows by Lemma~\ref{lem:tom} and induction that
$T = \mathsf{inv}(\mathbf{a}_{1}) \vee \ldots  \vee \mathsf{inv}(\mathbf{a}_{m})$
is a subalgebra of $S$.
It further follows that $T$ is isomorphic to a finite direct product of finite symmetric inverse monoids
by Lemma~\ref{lem:tom} and induction.
\end{proof}

\subsection{Characterization of UHF monoids}

In formulating our main theorem, we have taken on board the suggestion of the referee and so relied
more upon the theory of MV-algebras.

The following definition is taken from \cite{KLLR}.
Let $S$ be a Boolean inverse monoid.
A function $\nu \colon \mathsf{E}(S) \rightarrow [0,1]$ is said to be a {\em (normalized) invariant mean}
if it satisfies three conditions:
\begin{enumerate}
\item $\nu (s^{-1}s) = \nu (ss^{-1})$ for all $s \in S$.
\item If $e$ and $f$ are orthogonal idempotents then $\nu (e \vee f) = \nu (e) + \nu (f)$.
\item $\nu (1) = 1$.
\end{enumerate}
We shall omit the word `normalized' in what follows.
Observe that $\nu (\bar{e}) = 1 - \nu (e)$.

The following was proved in \cite{LS}, but we provide proofs anyway because of its importance.

\begin{lemma}\label{lem:nim} \mbox{}
\begin{enumerate}
\item The finite symmetric inverse monoid $S = \mathcal{I}_{n}$ has exactly one invariant mean $\nu_{S}$ which takes only rational values.
\item If $\alpha \colon S \rightarrow T$ is a morphism between finite symmetric inverse monoids
then $\nu_{T}(\alpha (e)) = \nu_{S}(e)$. 
\end{enumerate}
\end{lemma}
\begin{proof}
(1) We state the result first.
Let $e$ be an idempotent in $\mathcal{I}_{n}$.
Then $e = 1_{A}$ where $A$ is a subset of $\{1, \ldots, n \}$.
Then $\nu (e) = \frac{\left| A \right|}{n}$, a rational number.
To prove this is correct, observe that the atomic idempotents map $i$ to $i$
where $i \in \{1, \ldots, n \}$.
Denote this atomic idempotent by $e_{i}$.
Now, observe, that $1 = \bigvee_{i=1}^{n} e_{i}$
and that $e_{i}\,  \mathscr{D} \, e_{j}$
and that every idempotent is a join of some of the atomic idempotents.
The defining properties of invariant means now deliver the result.

(2) Without loss of generality, we may assume that $S = \mathcal{I}_{m}$
and $T = \mathcal{I}_{n}$.
From $1 = \bigvee_{i=1}^{n} e_{i}$, we deduce that $1 = \alpha (1) = \bigvee_{i=1}^{n} \alpha (e_{i})$.
Since $e_{i}\,  \mathscr{D} \, e_{j}$, we have that $\alpha (e_{i}) \,  \mathscr{D} \, \alpha (e_{j})$.
The result now follows using the atomic idempotents in $T$.
\end{proof}

We now prove the easy direction of the main theorem of this section.

\begin{proposition}\label{prop:UHF-easy}
let $S$ be a countably infinite Boolean inverse monoid.
If $S$ is UHF then $S$ is an AF Foulis monoid in which $\mathsf{L}(S)$ is a rational MV-algebra.
\end{proposition}
\begin{proof}
Let $S$ be a UHF monoid.
It is an AF monoid by construction.
We prove that the poset $S/\mathscr{J}$ is linearly ordered.
Let $a$ and $b$ be non-zero elements of $S$.
Then $a,b \in S_{i}$ for some $i$.
But $S_{i}$ is a finite symmetric inverse monoid.
On $S_{i}$ the $\mathscr{J}$ relation induces a linear order by 
Lemma~\ref{lem:jade}.
Without loss of generality, we may assume that $a = xby$ where $x,y \in S_{i}$.
It follows that $SaS \subseteq SbS$ in $S$, as required.
By Theorem~\ref{them:locally-matricial}, $S$ is basic and so piecewise-factorizable by Proposition~\ref{prop:basic-properties}.
It is therefore factorizable by Lemma~\ref{lem:basic-factorizable}.
We have therefore proved that $S$ is a Foulis monoid.
We prove that UHF monoids are $0$-simplifying.
Let $I$ be a non-zero additive ideal of $S$.
We prove that $I = S$.
Let $a \in I$ be any non-zero element of $I$.
Then $a \in S_{i}$ for some $i$.
It follows that $S_{i} \cap I \neq \{0\}$
and  $S_{i} \cap I$ is a non-zero additive ideal in $S_{i}$.
But $S_{i}$ is $0$-simplifying.
It follows that $S_{i} \subseteq I$.
We can now see that $I \cap S_{j} \neq \{0\}$ for all $j \geq i$.
In every case $S_{j} \subseteq I$.
We deduce that $I = S$.
By Proposition~\ref{prop:green}, $\mathsf{L}(S)$ is a simple MV-algebra.
By Proposition~\ref{prop:simple-MV},
$\mathsf{L}(S)$ is an MV-subalgebra of $[0,1]$.
It remains to prove that $\mathsf{L}(S)$ is rational.
We prove first that $S$ is equipped with a unique invariant mean.
To begin with, we have to show that $S$ is actually equipped with an invariant mean.
Define $\nu (e) = \nu_{i}(e)$ if $e \in S_{i}$ where 
$\nu_{i}$ is the unique invariant mean on $S_{i}$
guaranteed by part (1) of 
Lemma~\ref{lem:nim}.
Suppose that $e \in S_{j}$ also.
Without loss of generality, we may assume that $S_{i} \subseteq S_{j}$.
We now apply part (2) of Lemma~\ref{lem:nim}
to deduce that $\nu$ is well-defined.
It is an invariant mean since $\nu_{i}$ is an invariant mean.
Uniqueness now follows from part (1) of Lemma~\ref{lem:nim}.
Observe that $\nu$ always takes rational values.
Now define
$\theta \colon \mathsf{L}(S) \rightarrow [0,1] \cap \mathbb{Q}$ 
by $\theta ([e]) = \nu (e)$.
This is a well-defined function since $\nu$ is an invariant mean that always
takes rational values.
We prove that  
$\theta$ is an injective homomorphism of MV-algebras.
We begin by showing that $\theta$ is a morphism of MV-algebras.
We deal with negations first.
By \cite[Lemma 2.1]{KLLR}, we have that $\nu ([\bar{e}]) = 1 - \nu ([e]) = \nu (\neg [e])$.
We calculate $\theta ([e] \oplus [f])$ and show it is equal to $\theta ([e]) \oplus \theta ([f])$.
We are working inside a Foulis monoid where $S/\mathscr{J}$ is linearly ordered.
There are therefore two cases.
Either $S\bar{e}S \subseteq SfS$ or $SfS \subseteq S\bar{e}S$.
Suppose the former.
Then $[\bar{e}] \wedge [f] = [\bar{e}]$.
It follows that $[e] \oplus [f] = [1]$.
Thus $[f] = [\bar{e}]$ and so $\theta ([e] \oplus [f]) = 1$.
On the other hand,  $\theta ([e]) \oplus \theta ([f]) = \nu (e) + (1 - \nu (e)) = 1$.
Suppose now that $SfS \subseteq S\bar{e}S$.
Then $[e] \oplus [f] = [e] + [f]$.
Thus, there is an idempotent $f'$ such that $f' \, \mathscr{D} \,  f$ where $f' \perp e$.
It follows that $[e] \oplus [f] = [e \vee f']$.
Thus  $\theta ([e] \oplus [f]) = \theta [e \vee f'] = \theta ([e]) \oplus \theta ([f]) = \nu (e) + \nu (f)$.
This is also equal to  $\theta ([e]) \oplus \theta ([f])$.
This map has to be an embedding since the kernel is an ideal of $\mathsf{L}(S)$,
which is simple.
\end{proof}

The proof of the hard direction relies on the following definition.
We say that a Boolean inverse monoid $S$ satisfies the {\em idempotent condition}
if for all non-zero idempotents $e$ and $f \neq 1$ the following holds:
$e \prec f$ implies there exist orthogonal idempotents $f_{1}, \ldots, f_{n}$
where
$f_{i} \, \mathscr{D} \, f_{j}$, for all $1 \leq i,j \leq n$,
$f \, \mathscr{D} \, f_{1} \vee \ldots \vee f_{n}$
and there exists $m < n$ such that
$e \, \mathscr{D} \, f_{1} \vee \ldots \vee f_{m}$.
It is easy to check that finite symmetric inverse monoids satisfy the idempotent condition
as well as UHF monoids.

\begin{proposition}\label{prop:stone}
Let $S$ be an 
AF Foulis monoid in which $\mathsf{L}(S)$ is a rational MV-algebra.
Then the idempotent condition holds.
\end{proposition}
\begin{proof}
Let $e_{1}$ and $f_{1}$ be non-zero idempotents of $S$ such that $e_{1} \prec f_{1}$ where $f_{1} \neq 1$.
By Lemma~\ref{lem:order-order}, we have that $[e_{1}] < [f_{1}]$ in  $\mathsf{L}(S)$.
But,  $L(S)$ is locally finite by Proposition~\ref{prop:locally-finite-MV}.
Thus $\langle [e_{1}], [f_{1}] \rangle$ is a finite MV-subalgebra of $\mathsf{L}(S)$.
We know exactly which finite MV-subalgebras can occur by Lemma~\ref{lem:zelensky}.
We now apply Lemma~\ref{lem:trump}.
There is an element $[e]$ of 
$\langle [e_{1}], [f_{1}] \rangle$ 
such that
$[e_{1}] = t[e]$ and $[f_{1}] = s[e]$ where $s > t$.
Since $[f_{1}] = s[e]$, we may apply Lemma~\ref{lem:cheese-cake} to write $f_{1} \, \mathscr{D} \, g_{1} \vee \ldots \vee g_{s}$
where the $g_{i}$ form a set of orthogonal idempotents and $e \, \mathscr{D} \, g_{i}$.
Similarly, $e_{1} \,\mathscr{D} \, h_{1} \vee \ldots \vee h_{t}$ where the $h_{j}$ form a set of orthogonal idempotents
and $e \, \mathscr{D} \, h_{j}$.
Let $h_{i} \stackrel{x_{i}}{\rightarrow} g_{i}$.
Put $x = \bigvee_{i=1}^{t}x_{i}$.
Then this element proves that 
$h_{1} \vee \ldots \vee h_{t} \, \mathscr{D} \, g_{1} \vee \ldots \vee g_{t}$.
Thus the idempotent condition holds.
\end{proof}

The reason why the idempotent condition is so important is explained by the next result;
it enables us to embed matricial subalgebras into finite subalgebras which are isomorphic to
finite symmetric inverse monoids.

\begin{proposition}\label{prop:marble} Let $S$ be a Boolean inverse monoid 
in which $\mathscr{D} = \mathscr{J}$, 
$S/\mathscr{J}$ is linearly ordered,
and the idempotent condition holds.
 Let $e$ and $f$ be orthogonal idempotents such that
 $T_{e}$ is a subalgebra of $eSe$ and $T_{f}$ is a subalgebra of $fSf$
 where both $T_{e}$ and $T_{f}$ are finite simple Boolean inverse monoids.
 Then there is a finite simple Boolean inverse subalgebra $T$ of $(e \vee f)S(e \vee f)$
 such that $T_{e}, T_{f} \subseteq T$.
 \end{proposition}
 \begin{proof} By Lemma~\ref{lem:jordan},
 we may write $T_{e} = \mathsf{inv}(\mathbf{a})$ and $T_{f} = \mathsf{inv}(\mathbf{b})$,
where $\mathbf{a}$ and $\mathbf{b}$ are multi-infinitesimals.
By assumption, $\mathbf{e}(\mathbf{a}) = e$ and  $\mathbf{e}(\mathbf{b}) = f$ where $e \perp f$.
Put $e_{1} = \alpha (\mathbf{a})$ and $f_{1} = \alpha (\mathbf{b})$.
Because $S/\mathscr{J}$ is linearly ordered there are three possibilities:
$Se_{1}S \subset Sf_{1}S$,
$Se_{1}S = Sf_{1}S$
or
$Sf_{1}S \subset Se_{1}S$.
The second possibility means that 
$e_{1} \, \mathscr{J} \, f_{1}$ 
and so, by our assumption,
$e_{1} \, \mathscr{D} \, f_{1}$.
Without loss of generality, we may therefore assume that
there are two cases:
(Case 1) $e_{1} \, \mathscr{D} \, f_{1}$;
(Case 2) $e_{1} \preceq f_{1}$.
We shall deal with these two cases separately.

{\em Proof via (Case 1).}
For this case, we do not need the idempotent condition.
Recall that all the idempotents in a multi-infinitesimal are $\mathscr{D}$-related.
Let $e_{1} \stackrel{x}{\leftarrow} \omega (\mathbf{b})$.
Thus we can `splice' the multi-infinitesimals $\mathbf{a}$ and $\mathbf{b}$ together,
by Lemma~\ref{lem:splice},
to obtain the multi-infinitesimal $\mathbf{a}x\mathbf{b}$.
Put $T = \mathsf{inv}(\mathbf{a}x\mathbf{b})$,
where $\mathbf{e}(\mathbf{a}x\mathbf{b}) = e \oplus f$.
Then $T$ is a finite simple Boolean inverse monoid and subalgebra of $(e \vee f)S(e \vee f)$.
Furthermore, $T_{e}, T_{f} \subseteq T$.
Thus we have proved the proposition in (Case 1).

{\em Proof via (Case 2).}
We can assume that $e_{1} \prec f_{1}$ and we invoke the idempotent condition.
There exist orthogonal $\mathscr{D}$-related idempotents $f_{1}',\ldots, f_{n}'$ and $m < n$ such that
$f_{1} \, \mathscr{D} \, f_{1}' \vee \ldots \vee f_{n}'$ 
and  
$e_{1} \, \mathscr{D} \, f_{1}' \vee \ldots \vee f_{m}'$.

We deal with $f_{1}$ first.
There is a multi-infinitesimal $\mathbf{c}$ 
the idempotents in which are $f_{n}',\ldots, f_{1}'$
and $\mathbf{e}(\mathbf{c}) = f_{1}' \vee \ldots \vee f_{n}'$.
It follows that 
$\mathbf{e}(\mathbf{c}) \, \mathscr{D} \, f_{1}$.
Let $\mathbf{e}(\mathbf{c}) \stackrel{x}{\rightarrow} f_{1}$.
By Lemma~\ref{lem:adams}, we may form the translated multi-infinitesimal $x \cdot \mathbf{c}$
which, 
by construction, is such that $\mathbf{e}(x \cdot \mathbf{c}) = f_{1}$.
We have that $\alpha (\mathbf{b}) = \mathbf{e}(x \cdot \mathbf{c})$,
and so we may therefore form the product $\mathbf{b} \times x \cdot \mathbf{c}$
by Lemma~\ref{lem:franklin}.
Observe that $T_{f} \subseteq \mathsf{inv}(\mathbf{b} \times x \cdot \mathbf{c})$
and $\mathbf{e}(\mathbf{b} \times x \cdot \mathbf{c}) = f$.
The point is that we have embedded $T_{f}$ in a larger finite simple Boolean inverse monoid
which is still a subalgebra of $fSf$.

We now deal with $e_{1}$.
By assumption, there is a truncation $\mathbf{c}'$ of $\mathbf{c}$
such that $e_{1} \, \mathscr{D} \, \mathbf{e}(\mathbf{c}')$.
Let $\mathbf{e}(\mathbf{c}') \stackrel{y}{\rightarrow} e_{1}$.
By Lemma~\ref{lem:adams}, we may form the translated multi-infinitesimal $y \cdot \mathbf{c}'$.
By construction, $\mathbf{e}(y \cdot \mathbf{c}') = e_{1}$.
We may therefore form the product $\mathbf{a} \times y \cdot \mathbf{c}'$.
We have that $T_{e} \subseteq \mathsf{inv}(\mathbf{a} \times y \cdot \mathbf{c}')$
and $\mathbf{e}(\mathbf{a} \times y \cdot \mathbf{c}') = e$.
The point is that we have embedded $T_{e}$ in a larger finite simple Boolean inverse monoid
which is still a subalgebra of $eSe$.

Every idempotent in $\mathbf{a} \times y \cdot \mathbf{c}'$ is $\mathscr{D}$-related to every
idempotent in $y \cdot \mathbf{c}'$.
Similarly, every idempotent in $\mathbf{b} \times x \cdot \mathbf{c}$ is $\mathscr{D}$-related to every
idempotent in $x \cdot \mathbf{c}$.
But every idempotent in $y \cdot \mathbf{c}'$ is $\mathscr{D}$-related to every
idempotent in $\mathbf{c}'$.
Similarly, every idempotent in $x \cdot \mathbf{c}$
is $\mathscr{D}$-related to every
idempotent in $\mathbf{c}$.
But $\mathbf{c}'$ is a truncation of $\mathbf{c}$.
It follows that 
every idempotent in $\mathbf{a} \times y \cdot \mathbf{c}'$ is $\mathscr{D}$-related to every
idempotent in $\mathbf{b} \times x \cdot \mathbf{c}$. 
We are therefore back to (Case 1).
\end{proof}
 
We can now prove our main theorem.

\begin{theorem}\label{thm:UHF}
let $S$ be a countably infinite Boolean inverse monoid.
Then $S$ is a UHF monoid if and only if $S$ is an AF Foulis monoid in which $\mathsf{L}(S)$ is a rational MV-algebra.
\end{theorem}
\begin{proof} 
We proved the easy direction in Proposition~\ref{prop:UHF-easy}.
It remains to prove the hard direction.
The MV-algebra $\mathsf{L}(S)$ is simple by 
Proposition~\ref{prop:locally-finite-MV}.
Thus the order on $\mathsf{L}(S)$ is linear by  Proposition~\ref{prop:simple-MV}.
By Lemma~\ref{lem:order-order}, $S/\mathscr{J}$ is linearly ordered.
By Proposition~\ref{prop:stone}, $S$ satisfies the idempotent condition.
We can write $S = \bigcup_{i=1}^{\infty} S_{i}$
where $S_{1} \subseteq S_{2} \subseteq \ldots$
and each $S_{i}$ is a matricial subalgebra.
By Lemma~\ref{lem:alex}, and repeated application of Proposition~\ref{prop:marble},
there is a finite simple subalgebra $T_{1}'$ containing $T_{1}$.
Since $T_{1}'$ is finite, there is an $i$ such that
$T_{1}' \subseteq T_{i}$.
We may likewise construct a finite simple subalgebra $T_{2}'$ containing $T_{i}$.
Continuing in this way, we have shown that $S$ is a UHF monoid.
\end{proof}

We could equally well characterize UHF monoids as follows:
they are the countably infinite simple basic Boolean inverse monoids in which  the group of units $\mathsf{U}(S)$ is locally finite and in which $\mathsf{L}(S)$ is a locally finite MV-algebra.

\section{Concluding remarks}

The authors are grateful to Dr Ganna Kudryavtseva for her suggestion to include the following discussion.
Observe first that the groups studied in \cite{KS} are precisely the groups of units of our UHF monoids;
this follows from \cite[Lemma 3.11]{LS} and the fact that the `diagonal embeddings' of \cite{KS} are restrictions to the 
groups of units of standard maps \cite{LS}.
We now apply non-commutative Stone duality \cite{Law3} (and subsequent papers).
By Proposition~\ref{prop:principal},
if $S$ is a Boolean inverse meet-monoid.
then $S$ is basic precisely when its associated Boolean groupoid is principal.
UHF monoids are simple: that is, they are $0$-simplifying and fundamental (since they are AF and so basic);
this follows by Proposition~\ref{prop:basic-properties}.
These monoids are also meet-monoids by Proposition~\ref{prop:basic-properties}.
Recall that we call the (unique) countable atomless Boolean algebra the Tarski algebra.
A countably infinite Boolean inverse meet-monoid which has a Tarski algebra of idempotents is called a Tarski monoid. 
By what we term the Dichotomy Theorem \cite[Proposition 4.4]{Lawson2016},
a simple countable Boolean inverse meet-monoid is either finite, and so isomorphic to 
a finite symmetric inverse monoid, or a Tarski monoid.
It follows that a UHF monoid is a Tarski monoid.
By \cite[Theorem 2.10]{Lawson2017} 
--- which is an interpretation of work by Matui \cite[Theorem 3.9]{Matui13} using non-commutative Stone duality ---
two  UHF monoids are isomorphic if and only if their groups of units are isomorphic.
However, as we explained above, these are just the groups studied in \cite{KS}.
This shows that, as we would expect, there is a close connection between our work on UHF monoids
and the groups defined in \cite{KS}.
However, we do not know of an abstract characterization of the groups in \cite{KS}.


\end{document}